\title{Geometric Invariants of Plane and Space Curves}
\author{Hana Mel\'{a}nov\'{a}\footnote{ Supported by Austrian Academy of Sciences, ÖAW, Doc Stipendium FA506081, and by Austrian Science Fund, FWF, Project AP31338} \\
   Faculty of Mathematics, University of Vienna, Austria\\
   \href{mailto:hana.melanova@univie.ac.at}{hana.melanova@univie.ac.at}}
\DeclareMathOperator{\codim}{codim}
\DeclareMathOperator{\ord}{ord}
\DeclareMathOperator{\Quot}{Quot}
\DeclareMathOperator{\rk}{rk}
\DeclareMathOperator{\Aut}{Aut}
\DeclareMathOperator{\GL}{GL}
\newcommand{\ps}[1]{[\![#1]\!]}
\newtheorem{remark}{Remark}[section]
\newtheorem{theorem}[remark]{Theorem}
\newtheorem{definition}[remark]{Definition}
\newtheorem{lemma}[remark]{Lemma}
\newtheorem{proposition}[remark]{Proposition}
\newtheorem{corollary}[remark]{Corollary}
\newtheorem{example}[remark]{Example}
\begin{document}

\maketitle
\begin{abstract}
The traditional study of plane and space algebraic curves by looking at their tangent vectors, curvatures and torsions provides geometric, but unfortunately not sufficient information about individual curves in order to be able to distinguish between two different ones. The aim of this note is to generalize the classical concept of curvature and torsion to \emph{higher algebraic curvatures}, a family of generators of so-called \emph{geometric invariants} --- algebraic quantities of parametrized curves that are equivariant under reparametrizations. We show, further, that each such quantity can be equally expressed as a rational function in the defining implicit equations of the curve and their higher partial derivatives. Moreover, we prove that each smooth analytic branch of a complex algebraic curve is already uniquely determined by the algebraic curvatures. 
\end{abstract}

\section{Introduction and Geometric Motivation}

In geometry, in order to study and understand the local shape of a plane curve in $\mathbb{A}_{\mathbb{R}}^2$, one usually looks at its tangent line and curvature at a particular point. In the case of a space curve in~$\mathbb{A}_{\mathbb{R}}^3$, one has also the notion of torsion which measures the deviation of the curve from being a plane one. All these numerals reflect several local geometric properties of the curve at a given point. They, however, do not determine the curve uniquely, even not its analytic branches at a given point. 

Let us consider the circle centered at the point $(0,-\frac{1}{2})$ with radius $\frac{1}{2}$, defined by the equation 
\[
f(x,y)=x^2+\left(y+\frac{1}{2}\right)^2-\frac{1}{4},
\]
and the node defined by
\[
g(x,y)=x^2-y^3-y^2.
\]
Both curves pass through the point $(0,-1)$. Intuitively, they have both the same tangent line and curvature at the point $(0,-1)$ which can be confirmed also by a computation. 

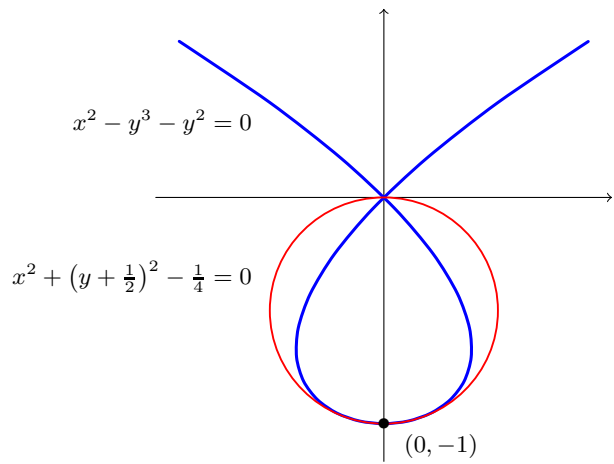
\begin{figure}[H]
\centering
\begin{tikzpicture}[line cap=round,line join=round,x=1.0cm,y=1.0cm]
\draw[color=black,-{>[scale=3,
  ]}] (-3.,0.) -- (3.,0.);

\draw[color=black,-{>[scale=3,
  ]}] (0.,-3.5) -- (0.,2.5);

\draw[scale=3,line width=0.4mm,domain=-1.3:1.3,smooth,variable=\x,blue] plot ({\x*(\x*\x-1)},{\x*\x-1});
\node[above left] at (-1.6,0.7) {\footnotesize $x^2-y^3-y^2=0$};

\draw[->,line width=0.25mm,red] (0,-3/2) circle [radius=3/2];
\node[below left] at (-1.6,-0.7) {\footnotesize $x^2+\left(y+\frac{1}{2}\right)^2-\frac{1}{4}=0$};

\draw (0,-3.3) node[anchor=south] {\Large\textbullet};
\node[below right] at (0.13,-3.0) {\footnotesize $(0,-1)$};

\end{tikzpicture}

\caption{Two different curves with the same tangent line and curvature at the point $(0,-1)$:\newline \hspace*{1.5cm} a circle (red) and a node (blue).}
\end{figure}

These two curves are, however, even at the point $(0,-1)$ \emph{very} different. In fact, if we zoom at the point $(0,-1)$ in, we observe a clear deviation of the individual arcs from each other:\\

\begin{figure}[H]
\centering
\begin{tikzpicture}[line cap=round,line join=round,x=1.0cm,y=1.0cm]

\draw[color=black,-{>[scale=3,
  ]}] (0.,-0.6) -- (0.,2.5);

\draw[scale=10,line width=0.4mm,domain=-0.5:0.5,smooth,variable=\x,blue] plot ({\x*(\x*\x-1)},{\x*\x});
\node[above left] at (-3.4,1.2) {\footnotesize $x^2-y^3-y^2=0$};

\draw[scale=7,line width=0.4mm,domain=-0.12:0.12,smooth,variable=\x,red] plot ({5*\x*(\x*\x*\x*\x*\x*\x*\x*\x*\x*\x*\x*\x*\x*\x*\x-1)},{5*\x*\x-0.004});
\node[below left] at (-3.4,0.4) {\footnotesize $x^2+\left(y+\frac{1}{2}\right)^2-\frac{1}{4}=0$};

\draw (0,-0.348) node[anchor=south] {\LARGE\textbullet};
\node[below right] at (0.17,-0.17) {\footnotesize $(0,-1)$};

\end{tikzpicture}

\caption{The deviation of the analytic branches of a circle (red) and of a node (blue) at the\newline \hspace*{1.5cm} point $(0,-1)$.}
\end{figure}
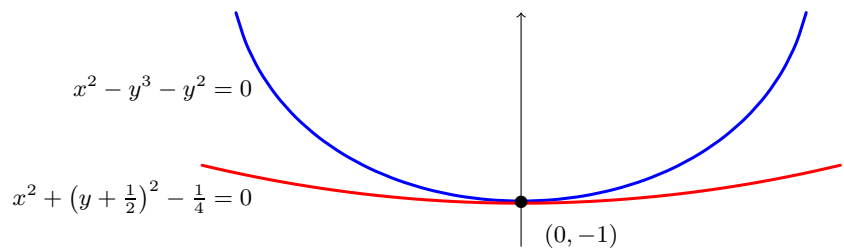

Let us formulate this observation more precisely: The analytic branches at $(0,-1)$ of the individual curves can be parametrized by
\[
\gamma_f\colon t\mapsto \left(\frac{1}{2}\cos(t),-\frac{1}{2}\sin(t)-\frac{1}{2}\right)\]
and
\[
\gamma_g\colon t\mapsto (t^3-t,t^2-1),
\]
respectively. To see that these two parametrizations indeed define different arcs, let us compose $\gamma_f$ with the analytic map $t\mapsto \arccos\bigl(2(t^3-t)\bigr)$ in order to obtain the following parametrization of the branch at $(0,-1)$ of the curve $\{f=0\}$:
\[
\gamma_{f^*}\colon t\mapsto \left(t^3-t,-\frac{\sqrt{-4(t^3 - t)^2 + 1}}{2} - \frac{1}{2}\right).
\]
Now, it is easy to compare the arcs $\gamma_{f^*}(t)$ and $\gamma_g(t)$ and to see that we have the equality
\[
\gamma_{f^*}(t)=\gamma_g(t)
\]
only for $t=0,\pm 1$, which gives the points $(0,0)$ and $(0,-1)$ on the curves. Hence, in a neighborhood of $(0,-1)$, the curves $\{f=0\}$ and $\{g=0\}$ have no points in common, except for the point $(0,-1)$ itself, and thus have different analytic branches at $(0,-1)$.

In general, one cannot reconstruct parametrizations or implicit equations of an analytic branch of a curve at a point only from the tangent vector, curvature and torsion at that point.

The goal of this note is to introduce a more refined family of geometric algebraic quantities of algebraic curves in $\mathbb{A}_{\mathbb{C}}^n$, for $n\geq 2,$ which, on one hand, can be computed from local parametrizations of curves at a given point and whose values do not depend on a choice of prametrization --- therefore geometric quantities --- and which, on the other hand, equally admit a description in terms of the defining implicit equations of curves --- therefore algebraic quantities. And finally, their values at a single smooth point of a curve provide already complete information about the curve at that point --- about its local parametrizations and also about the implicit equation defining the analytic branch of the curve.

Let us now describe briefly the constructions and results for plane curves over $\mathbb{R}$. Let us fix a plane algebraic curve $Y\subseteq\mathbb{A}^2_{\mathbb{R}}$ with $0\in Y$ a smooth point. Let $f\in\mathbb{R}[x,y]$ be the defining equation of $Y$. The slope of the tangent vector and the curvature of $Y$ at $0$ are given by the evaluation at $0$ of
\[
s(f)=-\frac{f_x}{f_y} \quad \text{ and }\quad \kappa(f)=\frac{|f_{xx}f_y^2-2f_{xy}f_xf_y+f_{yy}f_x^2|}{\sqrt{(f_x^2+f_y^2)^3}},
\]
respectively. Here $f_x, f_y, f_{xx},\dots$ denote the partial derivatives $\partial_xf,\partial_yf,\partial_{x}^2f,\dots$. Notice that as $Y$ is smooth at $0$, we can w.l.o.g.~assume that $f_{y}(0)\neq 0$ and so both, $s(f)$ and $\kappa(f)$, are well defined at $0$. They are, however, in general not sufficient to distinguish between two analytic branches of the same curve or of two different curves, so we search for further geometric algebraic quantities of the curve.

At this point it is instructive to look at $Y$ from the perspective of parametrization. So let $(x(t),y(t))$ be a formal parametrization of $Y$ at $0$. We consider $x(t),y(t)\in\mathbb{R}\ps{t}$ as formal power series and may then express $s(f)$ and $\kappa(f)$ in terms of the parametrization by
\[
s(t)=\frac{y'(t)}{x'(t)} \quad\text{ and }\quad \kappa(t)=\frac{|y''(t)x'(t)-y'(t)x''(t)|}{\sqrt{(x'(t)^2+y'(t)^2)^3}}.
\]
Their evaluation at $t=0$ gives again the slope of the tangent vector and the curvature of $Y$ at $0$, respectively. Notice that due to the fact that the pair $(x(t),y(t))$ parametrizes $Y$ at a smooth point and the assumption $f_y(0)\neq 0$, we have $\ord(x(t))=1$, or equivalently $x'(0)\neq 0$. To work with algebraic objects, we modify the second formula in order to avoid roots, getting the \emph{algebraic curvature}:
\[
\kappa_1(t)\coloneqq\frac{y''(t)x'(t)-y'(t)x''(t)}{x'(t)^3}.
\]
(We could take as the denominator any non-trivial linear combination of $x'(t)$ and $y'(t)$ but for further computations it is more convenient to take $x'(t)$.) Now comes the crucial idea: By definition, the tangent vector and also the classical curvature of $Y$ at $0$ are geometrically defined objects associated to $Y$ at $0$ and as such, their values at $0$ do not depend on the choice of parametrization. This is due to the fact that the curvature is (classically) defined as the inverse of the radius of the osculating circle for $Y$ at $0$.

The formulas for the slope of the tangent vector and the curvature in terms of parametrization therefore do not depend on the chosen parametrization and this also holds for the algebraic curvature: the slope of the tangent vector and the algebraic curvature are rational expressions in $x(t)$ and $y(t)$ and their derivatives that are equivariant under the natural action of the group of reparametrizations $\Aut(\mathbb{R}\ps{t})$. We call such an equivariant expression a \emph{geometric invariant}. More precisely, for any reparametrization $\varphi$, the slope of the tangent vector satisfies the equality
\[
s(\varphi(t))=\frac{y'(\varphi(t))}{x'(\varphi(t))}=\frac{(y\circ\varphi)'(t)}{(x\circ\varphi)'(t)}.
\]
This shows that the slope of the tangent vector at a point does not change with reparametrizations of the curve. The same holds also for the algebraic curvature. From this it follows intuitively (and can be proven rigorously, see Theorem \ref{impl}) that $\kappa_1(t)$ must have also an expression as a formula in terms of $f(x,y)$ and its partial derivatives. Actually, the implicit formula for $\kappa_1(t)$ is given by
\[
\kappa_1(f)\coloneqq-\frac{f_{xx}f_y^2-2f_{xy}f_xf_y+f_{yy}f_x^2}{f_y^3}.
\]
This already shows that the algebraic curvature defines an algebraic quantity of the curve. The key observation now is that the parametric formula for the algebraic curvature can be computed from the slope of the tangent vector in the following way:
\[
\kappa_1(t)=\frac{\partial_t(s(t))}{x'(t)}.
\]
Hence, by differentiating the algebraic curvature w.r.t.~$t$ and dividing by $x'(t)$, in order to ensure again equivariance under reparametrizations, and repeating this process, we iteratively define the following family of parametric expressions
\[
\kappa_i(t)\coloneqq\frac{\partial_t(\kappa_{i-1}(t))}{x'(t)}.
\]
By construction, they are all equivariant under reparametrizations and thus admit also an implicit expression as rational functions in terms of $f(x,y)$ and its partial derivatives (as proven in Theorem \ref{impl}), i.e., there exist rational functions $\widetilde{\kappa}_i$ in the partial derivatives of $f$, i.e.,
\[
\widetilde{\kappa}_i\in\mathbb{R}(f_x,f_y,f_{xx},\dots)\subseteq\mathbb{R}(x,y),
\]
whose evaluation at the pair $(x(t),y(t))$ equals $\kappa_i(t)$:
\begin{align*}\label{impl:ex}
\widetilde{\kappa}_i(x(t),y(t))=\kappa_i(t).
\end{align*}
These constructions extend to complex space curves in $\mathbb{A}_\mathbb{C}^{n+1}$ in the following way: Let us consider an algebraic space curve $X\subseteq\mathbb{A}^{n+1}_\mathbb{C}$ with a smooth point $0\in X$ and let 
\[
\gamma\colon t\mapsto(x(t),y_1(t),\dots,y_n(t))
\]
be its analytic parametrization at $0$. For each $j=1,\dots,n$, we define
\[
s_j\coloneqq\frac{y_j'(t)}{x'(t)},
\]
and call them the \emph{slopes} of $X$, and 
\[
\kappa_{1,j}(t)\coloneqq\frac{y_j''(t)x'(t)-y_j'(t)x''(t)}{x'(t)^3} \quad\text{ and }\quad \kappa_{i,j}(t)\coloneqq\frac{\partial_t\kappa_{i-1,j}(t)}{x'(t)},
\]
the \emph{first} and \emph{higher algebraic curvatures} of $X$, respectively. We call again each rational expression in $x(t),y_1(t),\dots,y_n(t)$ and their higher derivatives, that is equivariant under reparametrizations, a \emph{geometric invariant}.

We will prove in the body of this paper that each geometric invariant can be written as a rational function in $x(t),y_j(t),s_j(t)$ and $\kappa_{i,j}(t),i\geq 1$, for $j=1,\dots,n$  and that it admits an implicit description. More precisely: 

\begin{theorem}\label{main1}
Let $\gamma(t)=(x(t),y_1(t),\dots,y_n(t))\in\mathbb{C}\ps{t}^{n+1}$ be a parametrization of a (not necessarily smooth) space algebraic curve $X\subseteq\mathbb{A}_\mathbb{C}^{n+1}$. Further, let $f_1,\dots,f_r\in\mathbb{C}[x,y_1,\dots,y_n]$ be its defining polynomials. Then for $p(t)$ a rational function in the higher derivatives of the components of $\gamma(t)$, the following statements are equivalent:
\begin{itemize}
    \item[(i)] $p(t)$ is a geometric invariant, i.e., it is equivariant under reparametrizations.
    \item[(ii)] $p(t)$ can be written as a rational function in $s_j(t)$ and $\kappa_{i,j}(t),$ for $1\leq j\leq n$ and $i\geq 1$. Moreover, $p(t)$ is invariant under the substitution
    \begin{align*}
i_\kappa\colon &x'(t)\mapsto 1, \partial^i_tx(t)\mapsto 0\ \text{ for all }\ i\geq 2\\
&y'_j(t)\mapsto s_j(t),\partial_t^iy_j(t)\mapsto \kappa_{i-1,j}(t)\ \text{ for all }\ i\geq 2, \text{ and } j=1,\dots, n.
\end{align*}
    \item[(iii)] There exist polynomials 
\[
\tilde{p}_1,\tilde{p}_2\in\mathbb{C}[\partial_{x}^{i_0}\partial_{y_1}^{i_1}\cdots\partial_{y_n}^{i_n}f_k\,:\,i_0,\dots,i_n\in\mathbb{N},1\leq k\leq r]\subseteq\mathbb{C}[x,y_1,\dots,y_n],
\]
such that the equality
\[
p(t)=\frac{\tilde{p}_1(\gamma(t))}{\tilde{p}_2(\gamma(t))}
\]
is fulfilled. In other words, $p(t)$ admits an implicit expression as a rational function in terms of the higher partial derivatives of the defining equations $f_1,\dots,f_r$ of $X$.
\end{itemize}
\end{theorem}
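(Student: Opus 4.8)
The plan is to prove the cycle $(i)\Rightarrow(ii)\Rightarrow(iii)\Rightarrow(i)$, treating the equivalence of $(i)$ and $(ii)$ as the heart of the matter and $(iii)$ as a translation of the parametric picture into the implicit one.

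For $(i)\Rightarrow(ii)$, the key is to set up the reparametrization action concretely. Write $\varphi\in\Aut(\mathbb{C}\ps{t})$, so $\varphi(t)=a_1t+a_2t^2+\cdots$ with $a_1\neq 0$, and compute how $\partial_t^k(x\circ\varphi)$, $\partial_t^k(y_j\circ\varphi)$ expand in terms of the $a_i$ and the derivatives of $x,y_j$ via Faà di Bruno. The subgroup fixing the $1$-jet (i.e. $a_1=1$) acts on the jet coordinates $\partial_t^kx(0),\partial_t^ky_j(0)$, and equivariance of $p$ means $p$ descends to a function on the quotient. The slopes $s_j$ and the curvatures $\kappa_{i,j}$ are, essentially by their construction via the operator $\partial_t/x'$, a complete set of coordinates on this quotient: this is where I would prove, by induction on the order, that $x'$, $s_j$ and the $\kappa_{i,j}$ generate the ring of reparametrization-invariant rational functions — equivalently, that the substitution $i_\kappa$ (which normalizes $x'\mapsto 1$, kills the higher derivatives of $x$, and records the $y_j$-derivatives as $s_j,\kappa_{i-1,j}$) is a section of the quotient map. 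Concretely, $x'$ itself is not invariant (it scales by $a_1$ under the full group), so a genuine invariant $p$, after being written as a rational function of the jet coordinates, must be expressible without $x'$ except through the invariant combinations; applying $i_\kappa$ then both exhibits $p$ as a rational function of the $s_j,\kappa_{i,j}$ and shows $p$ is fixed by $i_\kappa$, since on invariants $i_\kappa$ acts as the identity. The converse direction inside $(ii)$, and the step $(ii)\Rightarrow(i)$, follows because each $s_j$ and each $\kappa_{i,j}$ is individually a geometric invariant — for $s_j$ this is the chain-rule computation already displayed in the introduction, and for $\kappa_{i,j}$ it follows by induction using the identity $\kappa_{i,j}=\partial_t(\kappa_{i-1,j})/x'$ together with the fact that $\partial_t/x'$ sends invariants to invariants (because $\partial_t(q\circ\varphi)=\varphi'\cdot(\partial_tq)\circ\varphi$ and $x'\circ\varphi=\varphi'\cdot(x'\circ\varphi)$, so the $\varphi'$ cancels).

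For $(ii)\Rightarrow(iii)$ and $(iii)\Rightarrow(i)$, I would invoke the implicit description already promised in Theorem~\ref{impl}: each $s_j$ and each $\kappa_{i,j}$, being a geometric invariant, equals $\tilde p_{k,1}(\gamma)/\tilde p_{k,2}(\gamma)$ for polynomials in the partial derivatives of the $f_\ell$; substituting these into the rational expression for $p$ from $(ii)$ and clearing denominators yields the required $\tilde p_1,\tilde p_2$, giving $(iii)$. For $(iii)\Rightarrow(i)$ one checks that the evaluation $q\mapsto q(\gamma)$ of a polynomial in the $\partial^{\mathbf{i}}f_\ell$ produces a reparametrization-invariant function: since $f_\ell(\gamma(t))\equiv 0$, differentiating shows each $\partial^{\mathbf{i}}f_\ell$ evaluated along $\gamma$ is unchanged when $\gamma$ is replaced by $\gamma\circ\varphi$ (the partial derivatives of $f_\ell$ are honest functions on $\mathbb{A}^{n+1}_\mathbb{C}$, so their pullback along $\gamma$ only depends on the image, not the parametrization), hence any rational function of them is invariant, i.e. a geometric invariant — closing the cycle.

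The main obstacle I anticipate is the first implication, $(i)\Rightarrow(ii)$: making precise and proving that the reparametrization-invariant rational functions of the jet of $\gamma$ are \emph{exactly} the rational functions of $x$, the $y_j$, the $s_j$ and the $\kappa_{i,j}$. This is a statement about the ring of invariants of the (pro-algebraic) reparametrization group acting on jet space, and the delicate points are (a) handling the rational — not merely polynomial — functions, so one must control which denominators can occur and argue that the relevant localization of the jet ring is still "generated" by the invariants in the appropriate sense, and (b) carrying the induction on jet order cleanly, showing at each stage that the only new invariant combination is $\kappa_{i,j}$ and that $x$-derivatives of order $\geq 2$ contribute nothing new once one passes to invariants. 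The section $i_\kappa$ is exactly the bookkeeping device that makes this induction go through, and verifying that $i_\kappa$ is well-defined on the localized ring and restricts to the identity on invariants is the technical crux.
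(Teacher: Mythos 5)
Your organization of the proof (reduce everything to $(i)\Rightarrow(ii)$, treat $(iii)$ as a translation) matches the paper's, and the easy legs are essentially right: $(ii)\Rightarrow(i)$ is the paper's Lemma \ref{11} together with Example \ref{ex}(3) (modulo the typo: the identity you want is $(x\circ\varphi)'=\varphi'\cdot(x'\circ\varphi)$), and your $(iii)\Rightarrow(i)$ argument is fine for equivariance along the given curve — a direction the paper itself leaves implicit. For $(ii)\Rightarrow(iii)$, however, invoking Theorem \ref{impl} is circular in this context, since that theorem (and its space analogue, Theorem \ref{impl:sp}) is precisely part of the paper's proof of the present statement; what is actually needed, and what the paper supplies, is the construction of the implicit expressions: differentiate $f_k(\gamma(t))=0$, solve the resulting linear system by Cramer's rule to get $\widetilde{\kappa}_{0,j}=\det\mathcal{J}_j/\det\mathcal{J}$, and then iterate via $\widetilde{\kappa}_{i+1,j}=\partial_x\widetilde{\kappa}_{i,j}+\sum_{k}\partial_{y_k}\widetilde{\kappa}_{i,j}\cdot\widetilde{\kappa}_{0,k}$.

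The genuine gap is in $(i)\Rightarrow(ii)$. You correctly locate the crux — that every geometric invariant is fixed by $i_\kappa$ and hence is a rational function of the slopes and curvatures — but your argument assumes it rather than proves it: the phrase ``on invariants $i_\kappa$ acts as the identity'' is exactly the assertion to be established (Proposition \ref{13}, respectively equality (\ref{invar}) of Theorem \ref{14}), and your closing paragraph concedes the verification is missing. The paper closes this gap in two ways, neither of which appears in your sketch. One is a differential-field argument: $F$ is generated over $J=\mathbb{C}(x^{(0)},y^{(0)},\kappa_i)$ by the higher derivatives $x^{(i)}$, $i\geq 1$ (Lemma \ref{15}), and these are algebraically independent over the invariant field, proved by the induction $\Lambda(x^{(i)})-\varphi^{(i)}x^{(1)}\in V_{i-1}$ (Lemma \ref{16}) and a leading-coefficient argument in $\varphi^{(m)}$ (Proposition \ref{17}); comparing coefficients then forces $I_F\subseteq J$. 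The other — which is the rigorous version of your ``$i_\kappa$ is a section of the quotient'' picture — is to evaluate the equivariance identity on a D-algebraically independent family $x(t),y_1(t),\dots,y_n(t)$ with $\ord(x(t))=1$ and reparametrize by the compositional inverse of $x(t)$: then $x\circ\varphi=t$, the derivatives of $y_j\circ\varphi$ become the curvatures, and composing back with $\varphi^{-1}$ gives $p(\underline{\gamma(t)})=i_\kappa(p)(\underline{\gamma(t)})$; the D-algebraic independence (whose existence is Theorem \ref{09} in Appendix \ref{appendix}) is what upgrades this evaluated identity to the identity $p=i_\kappa(p)$ of rational expressions. Your jet-space formulation has no substitute for this genericity step: normalizing jets by the group action does not by itself let you conclude an identity of rational functions in infinitely many jet variables, nor does it control which denominators occur. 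Until one of these two devices (a differentially generic curve, or the algebraic-independence induction) is supplied, the implication $(i)\Rightarrow(ii)$, and with it the theorem, remains unproven.
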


The equivalence between $(i)$ and $(ii)$ is discussed in Theorems \ref{12} and \ref{14}, and the equivalence between $(i)$ and $(iii)$ then in Theorems \ref{impl} and \ref{impl:sp}.

Further, using Theorem \ref{main1}, we explain how to reconstructs a local parametrization of $X$ at the origin from the values at $0$ of the implicit expressions $\widetilde{s}_j$ and $\widetilde{\kappa}_{i,j}, i\geq 1$, of $s_j(t)$, and $\kappa_{i,j}(t), i\geq 1$, respectively. As all $\widetilde{s}_j$ and $\widetilde{\kappa}_{i,j}, i\geq 1$ are rational functions in the partial derivatives of the defining polynomials $f_1,\dots,f_r$ of the curve $X$, this gives a description of the local analytic equations of $X$ at $0$ in terms of the defining polynomials $f_1,\dots,f_r$:

\begin{corollary}\label{main2}
Let $X\subseteq\mathbb{A}_\mathbb{C}^{n+1}$ be an algebraic curve with $0\in X$ a smooth point. Further let $\gamma(t)=(x(t),y_1(t),\dots,y_n(t))\in\mathbb{A}_\mathbb{C}^{n+1}$ be a parametrization of $X$ at $0$. Then the analytic branch of $X$ at $0$ is defined by the ideal
\[
\mathcal{I}_X=\left(y_j-\widetilde{s}_j(0)\cdot x-\sum_{i\geq 1}\frac{\widetilde{\kappa}_{i,j}(0)}{(i+1)!}\cdot x^{i+1}  \,:\, 1\leq j\leq n\right),
\]
where $\widetilde{s}_j$ and $\widetilde{\kappa}_{i,j}$ are implicit expressions of $s_j(t)$ and $\kappa_{i,j}(t)$, respectively, as rational functions in the higher partial derivatives of the defining polynomials $f_1,\dots,f_r$ of $X$,  i.e.,
\[
\widetilde{s}_j,\widetilde{\kappa}_{i,j}\in\mathbb{C}[\partial_{x}^{i_0}\partial_{y_1}^{i_1}\cdots\partial_{y_n}^{i_n}f_k\,:\,i_0,\dots,i_n\in\mathbb{N},1\leq k\leq r]\subseteq\mathbb{C}[x,y_1,\dots,y_n]
\]
satisfy the equalities
\[
s_j(t)=\widetilde{s}_j(\gamma(t))\quad\text{ and }\quad\kappa_{i,j}(t)=\widetilde{\kappa}_{i,j}(\gamma(t)),
\]
respectively.

\end{corollary}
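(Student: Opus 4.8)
The plan is to reparametrize so that $x$ itself becomes the parameter, observe that in that ``graph'' parametrization the slopes and algebraic curvatures are literally the Taylor coefficients of the remaining components, and then to identify the analytic branch with the graph of those components.

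First I would use smoothness of $0$ to pass to the graph situation. As in the introduction we may assume $\ord x(t)=1$ (the condition $f_y(0)\neq 0$ in the plane case), achieving this, if necessary, by a linear change of coordinates. Then $t\mapsto x(t)$ defines a reparametrization $\varphi\in\Aut(\mathbb{C}\ps{t})$ fixing the origin, and replacing $\gamma$ by $\gamma\circ\varphi^{-1}$ produces a parametrization
\[
\gamma^*(s)=\bigl(s,\tilde y_1(s),\dots,\tilde y_n(s)\bigr),\qquad \tilde y_j:=y_j\circ\varphi^{-1},
\]
of the same analytic branch of $X$ at $0$, with $\tilde y_j(0)=y_j(0)=0$. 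Since now $x(s)=s$, we have $x'(s)\equiv 1$ and $\partial_s^i x(s)\equiv 0$ for $i\ge 2$, so the defining recursions collapse to $s_j(s)=\tilde y_j'(s)$, $\kappa_{1,j}(s)=\tilde y_j''(s)$ and $\kappa_{i,j}(s)=\partial_s\kappa_{i-1,j}(s)$, whence by induction on $i$,
\[
\kappa_{i,j}(s)=\tilde y_j^{(i+1)}(s)\qquad (i\ge 1,\ 1\le j\le n).
\]

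Next I would feed this into the implicit description. Since the slopes $s_j$ and the algebraic curvatures $\kappa_{i,j}$ are geometric invariants by construction, their values at the point $0\in X$ are the same whether computed from $\gamma$ or from $\gamma^*$; by the identities $s_j(t)=\widetilde s_j(\gamma(t))$ and $\kappa_{i,j}(t)=\widetilde\kappa_{i,j}(\gamma(t))$ of Theorem~\ref{main1}(iii) this common value is $\widetilde s_j(0)$, respectively $\widetilde\kappa_{i,j}(0)$, while from the collapsed recursions of the previous step it equals $\tilde y_j'(0)$, respectively $\tilde y_j^{(i+1)}(0)$. Hence
\[
\widetilde s_j(0)=\tilde y_j'(0)\qquad\text{and}\qquad \widetilde\kappa_{i,j}(0)=\tilde y_j^{(i+1)}(0)\qquad(i\ge 1).
\]
Substituting these into the Taylor expansion of $\tilde y_j$ at $0$ and using $\tilde y_j(0)=0$ gives
\[
\tilde y_j(s)=\widetilde s_j(0)\,s+\sum_{i\ge 1}\frac{\widetilde\kappa_{i,j}(0)}{(i+1)!}\,s^{i+1}.
\]

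Finally I would identify the branch with this graph. The analytic branch of $X$ at $0$ is the image of $s\mapsto(s,\tilde y_1(s),\dots,\tilde y_n(s))$, i.e.\ the germ at $0$ defined in $\mathbb{C}\{x,y_1,\dots,y_n\}$ (or its completion) by the kernel of the comorphism $x\mapsto x$, $y_j\mapsto\tilde y_j(x)$; reducing an arbitrary series modulo the elements $y_j-\tilde y_j(x)$ leaves an element of $\mathbb{C}\{x\}$, which lies in the kernel iff it vanishes, so that kernel equals $\bigl(y_j-\tilde y_j(x):1\le j\le n\bigr)$ and this is the full ideal of the branch (its quotient being the domain $\mathbb{C}\{x\}$). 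Substituting the formula for $\tilde y_j(x)$ obtained above gives exactly $\mathcal{I}_X$. The step that deserves the most care is this last identification --- that the $n$ graph equations generate the entire ideal of the branch and not merely a subideal --- together with the opening reduction to $\ord x(t)=1$, which is precisely where smoothness of $0$ enters and which guarantees that $\widetilde s_j$ and $\widetilde\kappa_{i,j}$ are regular at $0$; everything else is bookkeeping of Taylor coefficients.
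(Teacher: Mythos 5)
Your proposal is correct and follows essentially the same route as the paper's own proof (Corollary \ref{sp:cur}): reduce to a graph parametrization $(t,y_1(t),\dots,y_n(t))$, identify the slopes and algebraic curvatures with the Taylor coefficients of the $y_j$, and check that the graph equations $y_j-y_j(x)$ generate the full ideal of the branch --- the only differences being that you pass to the graph by an explicit reparametrization and settle the ideal question by a division/kernel argument, whereas the paper deduces $x'(0)\neq 0$ and $\det\mathcal{J}(0)\neq 0$ from the hypothesis $\widetilde{\kappa}_{i,j}(0)<\infty$ via equation (\ref{eval0}) and invokes primality and height $n$. One small caution: your closing claim that smoothness of $0$ alone guarantees regularity of $\widetilde{s}_j,\widetilde{\kappa}_{i,j}$ at $0$ is not quite right --- this needs the $x$-coordinate to be transverse to the curve (equivalently $\det\mathcal{J}(0)\neq 0$, which is what the finiteness hypothesis in Corollary \ref{sp:cur} encodes), and if you arrange $\ord x(t)=1$ by a linear change of coordinates, the implicit expressions $\widetilde{s}_j,\widetilde{\kappa}_{i,j}$ in the statement change along with the coordinates, so the resulting equations describe the branch in the new coordinates.
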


For the proof see Corollaries \ref{reconstr} and \ref{sp:cur}. Let us mention at this point that the description of analytic branches of $X$ provided by this corollary is very helpful in resolution of singularities as it shows that the algebraic curvatures are looking very closely into the local geometry of the curve. Moreover, it can be shown (the precise statement is presented in \cite{me}) that the information provided by them is crucial for defining a suitable center of the blowup which improves the singularities of the curve in a much faster way than the standard blowup or the Nash modification.

The concept of algebraic curvatures and geometric invariants as well as the results mentioned above are defined and stated in the next sections of this note in a more general setting which allows applications in a broader context and make them to an even more powerful mathematical tool. This approach allows applications also outside algebraic geometry --- the geometric invariants can be for instance used to prove the First Fundamental Theorem for $\GL_n(\mathbb{C})$ as presented in \cite{mel}. We introduce in this note the concept of geometric invariants using only the language of differential fields, symbolic derivatives and symbolic chain rule. 

We begin with introducing the concept of geometric invariants of plane curves and with a detailed investigation of their basic properties. The techniques used in the case of plane curves apply then also to the space curve case and allow us a very natural extension of the results gained about geometric invariants of plane curves to geometric invariants of space curves. 

For the study of geometric invariants and their basic properties we will use the existence of differentially transcendental power series and their families. We collect some important facts about them and also proofs of their existence in Appendix \ref{appendix}.

\section{Geometric invariants of Plane Curves}\label{section2}

The aim of this section is to introduce the geometric invariants of plane curves and to study their basic properties. As already mentioned in the introduction, in this section, we will extend the concept of under reparametrizations equivariant rational functions in the components of a parametrization $\gamma(t)=(x(t),y(t))\in\mathbb{C}\ps{t}^2$ of a plane algebraic curve and their derivatives to a more general setting. We replace the derivatives $\partial_t^ix(t)$ and $\partial_t^iy(t)$ for $i\in\mathbb{N}$ by new variables $x^{(i)}$ and $y^{(i)}$, respectively, and translate the property of being equivariant under reparametrizations into a new invariance property on these variables. We study the structure of the field of these invariants and provide with Theorem \ref{12} a minimal countable system of generators over $\mathbb{C}$. Further, we show in Theorem \ref{impl} that these invariants can be described in terms of the implicit equation of the plane curve and that they already uniquely determine its smooth analytic branches, see Corollary \ref{reconstr}.

Let us consider two sets of countably many variables $x^{(i)}$ and $y^{(i)},i\in\mathbb{N}$ (we think of $x^{(i)}$ as a symbolic derivative of $x^{(i-1)}$). We consider the field 
\[
F\coloneqq \mathbb{C}(x^{(i)},y^{(i)}  \,:\, i\in\mathbb{N})
\]
generated by all $x^{(i)},y^{(i)}$, equipped with the $\mathbb{C}$-derivation 
\begin{align*}
\partial \colon F&\rightarrow F \\
x^{(i)}&\mapsto x^{(i+1)}\\
y^{(i)}&\mapsto y^{(i+1)}.
\end{align*} 
It thus becomes a differential field $(F,\partial)$. Let $\varphi^{(i)},i \in\mathbb{N}$, be another set of variables (they play a different role than $x^{(i)},y^{(i)}$). Let 
\[
L\coloneqq F(\varphi^{(i)}  \,:\, i\in\mathbb{N})=\mathbb{C}(x^{(i)},y^{(i)},\varphi^{(i)}  \,:\, i\in\mathbb{N}),
\]
and extend  $\partial$ to $L$ by $\partial(\varphi^{(i)})=\varphi^{(i+1)}.$  On $L$ we simulate the chain rule by another $\mathbb{C}$-derivation:
\begin{align*}
\chi\colon L&\rightarrow L \\
x^{(i)}&\mapsto x^{(i+1)}\varphi^{(1)}\\
y^{(i)}&\mapsto y^{(i+1)}\varphi^{(1)} \\
\varphi^{(i)}&\mapsto \varphi^{(i+1)}.
\end{align*} 
We think of $\varphi$ as a symbol for reparametrization of a parametrized curve. More precisely, given a parametrized curve $\gamma(t)=(x(t),y(t))\in\mathbb{C}\ps{t}^2$, we associate
\begin{align*}
x^{(0)}&\leftrightarrow x(t),\\
y^{(0)}&\leftrightarrow y(t),
\end{align*} 
and
\begin{align*} 
x^{(i)}&\leftrightarrow \partial_t^ix(t),\\
y^{(i)}&\leftrightarrow \partial_t^iy(t),
\end{align*} 
for $i\geq 1$. Let $\varphi\in\Aut(\mathbb{C}\ps{t})$ be an algebra automorphism. We call each such $\varphi$ a \emph{reparametrization}. Note that $\varphi$ is given by a power series $\varphi(t)\in\mathbb{C}\ps{t}$ with $\ord(\varphi(t))=1$. The automorphism group $\Aut(\mathbb{C}\ps{t})$ acts then from the right on $\mathbb{C}\ps{t}^2$ via
\begin{align*}
\Aut(\mathbb{C}\ps{t})\times\mathbb{C}\ps{t}^2&\rightarrow\mathbb{C}\ps{t}^2\\
\bigl(\varphi,(x(t),y(t))\bigr)&\mapsto\bigl(x(\varphi(t)),y(\varphi(t))\bigr).
\end{align*}
From now on, by a reparametrization $\varphi$ we always mean the power series representation $\varphi(t)$ of the automorphism $\varphi\in\Aut(\mathbb{C}\ps{t})$. We associate 
\[
\varphi^{(0)}\leftrightarrow \varphi(t)
\] 
and 
\[
\varphi^{(i)}\leftrightarrow\partial_t^i\varphi(t)
\]
for $i\geq 1$. With this, the derivation $\chi$ reflects the chain rule 
\[
\partial_t(\partial_t^ix\circ\varphi)(t)=(\partial^{i+1}_tx\circ\varphi)(t)\cdot\varphi'(t).
\]
Next we define a $\mathbb{C}$-morphism, i.e., a field homomorphism whose fixed field equals $\mathbb{C}$, on $L$ by 
\begin{align*}
\Lambda\colon L&\rightarrow L \\
x^{(i)}&\mapsto\chi^i(x^{(0)})\\
y^{(i)}&\mapsto\chi^i(y^{(0)})\\
\varphi^{(i)}&\mapsto\chi^i(\varphi^{(0)}),
\end{align*}
where $\chi^i$ denotes the composition $\underbrace{\chi\circ\dots\circ\chi}_{i-\text{times}}$. In terms of power series, this means 
\begin{align*}
\partial_t^ix(t)&\mapsto\partial^i_t(x\circ\varphi)(t),\\
\partial_t^iy(t)&\mapsto\partial^i_t(y\circ\varphi)(t),\\
\partial_t^i\varphi(t)&\mapsto\partial^i_t\varphi(t).
\end{align*}
So for $x(t)$ and $y(t)$, their higher derivatives are replaced by the derivatives of the compositions $(x\circ\varphi)(t)$ and $(y\circ\varphi)(t)$, respectively, for which the iterated chain rule applies. From now on we will denote the vectors $(x^{(0)},x^{(1)},\dots)$ and $(y^{(0)},y^{(1)},\dots)$ by $\underline{x}$ and $\underline{y}$, respectively, and for $p(x^{(i)},y^{(i)}:i\in\mathbb{N})\in F$ we will use the notation $p(\underline{x},\underline{y})$.

\begin{definition}\normalfont
A rational expression $p\in F$ is called a \emph{geometric invariant of plane curves} if it is fixed under $\Lambda,$ namely 
\[
\Lambda(p)=p.
\]
\end{definition}

In terms of power series (parametrizations) this means
\begin{align}\label{1}
p\bigl(\partial^i_tx(t),\partial_t^iy(t): i\in\mathbb{N}\bigr)\circ\varphi(t)=p\bigl(\partial^i_t(x\circ\varphi)(t),\partial^i_t(y\circ\varphi)(t): i\in\mathbb{N}\bigr).
\end{align}
Being a geometric invariant is hence a property which reflects the equivariance of a rational expression in power series $x(t),y(t)$ and their derivatives under reparametrizations. We denote the vectors $(x(t),\partial_tx(t),\partial^2_tx(t),\dots)$ and $(y(t),\partial_ty(t),\partial^2_ty(t),\dots)$ by $\underline{x(t)}$ and $\underline{y(t)}$, respectively.

\begin{proposition}\label{param}
Let $p(\underline{x},\underline{y})=\frac{g(\underline{x},\underline{y})}{h(\underline{x},\underline{y})}$ be an element of $F=\mathbb{C}(x^{(i)},y^{(i)}  \,:\, i\in\mathbb{N})$. Then the following statements are equivalent:
\begin{itemize}
    \item[(i)] $p$ is a geometric invariant of plane curves.
    \item[(ii)] The equality
     \[
    p\bigl(\underline{x(t)},\underline{y(t)}\bigr)\circ\varphi(t)=p\bigl(\underline{(x\circ\varphi)(t)},\underline{(y\circ\varphi)(t)}\bigr)
    \]
    holds for all power series $x(t),y(t)\in\mathbb{C}\ps{t}$ with $h\bigl(\underline{x(t)},\underline{y(t)}\bigr)\neq 0$ and all reparametrizations $\varphi(t)$, i.e., $p\bigl(\underline{x(t)},\underline{y(t)}\bigr)$ is equivariant under reparametrizations.
\end{itemize}
\end{proposition}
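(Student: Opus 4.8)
\medskip
\noindent\emph{Proof strategy.}
The bridge between the two conditions is a substitution that converts the abstract identity $\Lambda(p)=p$ into the equivariance statement. Given power series $x(t),y(t)\in\mathbb{C}\ps t$ and a reparametrization $\varphi(t)$, I would introduce the $\mathbb{C}$-algebra homomorphism $\varepsilon=\varepsilon_{x,y,\varphi}$ on $\mathbb{C}[x^{(i)},y^{(i)},\varphi^{(i)}:i\in\mathbb{N}]$ (extended to those elements of $L$ whose denominator is not killed) determined by
\[
x^{(i)}\mapsto(\partial_t^ix)(\varphi(t)),\qquad y^{(i)}\mapsto(\partial_t^iy)(\varphi(t)),\qquad\varphi^{(i)}\mapsto\partial_t^i\varphi(t).
\]
Directly from the definition of $\chi$ one checks $\varepsilon\circ\chi=\partial_t\circ\varepsilon$ on this polynomial ring — this is nothing but the power-series chain rule $\partial_t\bigl((\partial_t^ix)(\varphi(t))\bigr)=(\partial_t^{i+1}x)(\varphi(t))\cdot\varphi'(t)$, which is exactly what $\chi$ symbolizes — and hence $\varepsilon\circ\chi^{i}=\partial_t^{i}\circ\varepsilon$ by an immediate induction. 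Applying this to $x^{(0)},y^{(0)}$ and using $\varepsilon(x^{(0)})=x(\varphi(t))$ gives $\varepsilon(\Lambda(x^{(i)}))=\varepsilon(\chi^i(x^{(0)}))=\partial_t^i(x\circ\varphi)(t)$, and likewise for $y$. Substituting these into $p=g/h$, and using that precomposition with $\varphi$ is a ring endomorphism of $\mathbb{C}\ps t$ commuting with the polynomials $g,h$, one obtains
\[
\varepsilon\bigl(\Lambda(p)\bigr)=p\bigl(\underline{(x\circ\varphi)(t)},\underline{(y\circ\varphi)(t)}\bigr),\qquad\varepsilon(p)=p\bigl(\underline{x(t)},\underline{y(t)}\bigr)\circ\varphi(t).
\]
Thus the displayed identity in (ii) is precisely $\varepsilon_{x,y,\varphi}(\Lambda(p))=\varepsilon_{x,y,\varphi}(p)$, and $(i)\Rightarrow(ii)$ is then immediate: $\Lambda(p)=p$ in $L$ forces the two sides to agree after every such evaluation. (Here $h(\underline{x(t)},\underline{y(t)})\neq 0$ gives $\varepsilon(h)\neq 0$, so the left side is defined; the identity is read wherever both sides are defined, and in the reverse direction only one well-behaved triple will be needed.)

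For $(ii)\Rightarrow(i)$ the task is to upgrade "$\varepsilon_{x,y,\varphi}(\Lambda(p)-p)=0$ for all admissible triples" to "$\Lambda(p)-p=0$ in $L$", and for this it suffices to exhibit a single triple $(x,y,\varphi)$ for which $\varepsilon_{x,y,\varphi}$ is \emph{injective} on $L$. Since $L$ is purely transcendental over $\mathbb{C}$ in the variables $x^{(i)},y^{(i)},\varphi^{(i)}$, this injectivity is equivalent to algebraic independence over $\mathbb{C}$ of the family $\{(\partial_t^ix)(\varphi(t)),\,(\partial_t^iy)(\varphi(t)),\,\partial_t^k\varphi(t):i,k\in\mathbb{N}\}$. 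Applying the automorphism $f\mapsto f\circ\varphi$ of the Laurent series field $\mathbb{C}(\!(t)\!)$ (the fraction field of $\mathbb{C}\ps t$), which fixes $\mathbb{C}$ and hence preserves algebraic independence, and observing that $\partial_t^k\varphi(t)=\bigl((\partial_t^k\varphi)\circ\varphi^{-1}\bigr)\circ\varphi$, I can replace this by the equivalent requirement that
\[
\bigl\{\partial_t^ix(t),\ \partial_t^iy(t),\ (\partial_t^k\varphi)(\varphi^{-1}(t)):i,k\in\mathbb{N}\bigr\}
\]
be algebraically independent over $\mathbb{C}$.

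To realize such a triple I would invoke the existence statements of Appendix~\ref{appendix}. First pick a reparametrization $\varphi(t)\in\mathbb{C}\ps t$ (so $\ord\varphi=1$) whose derivative sequence $\{\partial_t^k\varphi(t):k\in\mathbb{N}\}$ is algebraically independent over $\mathbb{C}$ — for instance $\varphi=t\psi$ with $\psi$ a differentially transcendental series, $\psi(0)\neq 0$. Composing with $f\mapsto f\circ\varphi^{-1}$ shows $\{(\partial_t^k\varphi)(\varphi^{-1}(t)):k\}$ is again algebraically independent over $\mathbb{C}$, so it generates a countably generated subfield $K\subseteq\mathbb{C}(\!(t)\!)$. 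Next pick $x(t),y(t)\in\mathbb{C}\ps t$ whose derivative families $\{\partial_t^ix(t),\partial_t^iy(t):i\in\mathbb{N}\}$ are algebraically independent \emph{over $K$} — a jointly differentially transcendental pair relative to the countably generated field $K$, which is precisely what Appendix~\ref{appendix} supplies. Splitting any hypothetical finite $\mathbb{C}$-algebraic relation in the last displayed family into its $(x,y)$-part and its $\varphi$-part then contradicts one of these two independence properties, so the whole family is algebraically independent over $\mathbb{C}$; hence $\varepsilon_{x,y,\varphi}$ is injective. In particular $\varepsilon(h)\neq 0$, so the triple is admissible and all terms occurring are defined, and $\varepsilon_{x,y,\varphi}(\Lambda(p)-p)=0$ forces $\Lambda(p)=p$, which is (i).

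The genuinely nontrivial input is concentrated in that last step: producing the reparametrization $\varphi$ with algebraically independent derivative sequence and the pair $(x,y)$ differentially transcendental relative to the auxiliary field $K$. Everything else is bookkeeping — that $\chi$, and hence $\Lambda$, reproduces the analytic chain rule under the evaluation $\varepsilon$, and that precomposition with a reparametrization commutes with polynomial substitution — together with the elementary dictionary between injectivity of $\varepsilon$ and algebraic independence of the images of the generators. The only point requiring a word of care is the degenerate case in which, for a given representation $p=g/h$ and a particular $\varphi$, the right-hand side of (ii) is formally undefined; this is harmless for the equivalence, since in $(i)\Rightarrow(ii)$ one restricts to $\varphi$ for which it is defined, and in $(ii)\Rightarrow(i)$ one uses only the generic triple constructed above.
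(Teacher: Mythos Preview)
Your proof is correct and follows essentially the same strategy as the paper: both directions are handled via an evaluation map, with the nontrivial implication $(ii)\Rightarrow(i)$ reduced to the existence of a triple $(x,y,\varphi)$ for which the evaluation is injective, i.e.\ for which the relevant derivative family is algebraically independent over $\mathbb{C}$.

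The one noteworthy difference is in how that generic triple is produced. The paper simply takes $x(t),y(t),\varphi(t)$ to be a D-algebraically independent family of three power series (Theorem~\ref{09} with $l=3$), arranges $\varphi'(0)\neq 0$, and asserts that the needed independence of $\{(\partial_t^ix)\circ\varphi,(\partial_t^iy)\circ\varphi,\partial_t^k\varphi\}$ ``clearly remains true after a reparametrization''. Your two-step construction --- first $\varphi$, then $x,y$ differentially transcendental over the auxiliary field $K$ generated by $(\partial_t^k\varphi)\circ\varphi^{-1}$ --- is more explicit and makes the independence transparent by design, but it leans on a mild strengthening of Appendix~\ref{appendix}: you need a D-algebraically independent pair over a countably generated field $K$, whereas Theorems~\ref{06} and~\ref{09} are stated only over $\mathbb{C}$. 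This extension is straightforward (the same construction with coefficients $\alpha_i$ of infinite transcendence degree over $K$ works), so the gap is cosmetic, but you should flag it rather than claim the appendix ``precisely'' supplies it. Conversely, the paper's one-line justification hides the small computation that the mixed family (two blocks composed with $\varphi$, one block not) is still independent; your route avoids that computation at the cost of the relative version of the appendix result.
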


The proof of this proposition was obtained in collaboration with A.~Bostan.

\begin{proof}
$(ii)\Rightarrow(i)$: Let $x(t),y(t),\varphi(t)\in\mathbb{C}\ps{t}$ be a family of D-algebraically independent power series (see Appendix \ref{appendix} for the definition and basic properties of D-algebraically independent power series) satisfying the condition $\partial_t\varphi(0)\neq 0$, i.e., $\varphi(t)$ defines a reparametrization. Then the higher derivatives $\partial_t^ix,\partial_t^iy$ and $\partial_t\varphi$ do not satisfy any polynomial equation and clearly this remains true also after a reparametrization. Thus, they can be considered as variables $x^{(i)},y^{(i)},\varphi^{(i)}$. Then according to the chain rule and after rewriting the derivatives $\partial_t^ix\circ\varphi, \partial_t^iy\circ\varphi, \partial_t^i\varphi$ as $x^{(i)},y^{(i)},\varphi^{(i)}$, respectively, we have
\begin{align*}
p(\underline{x},\underline{y})&=p\bigl(\underline{x(t)},\underline{y(t)}\bigr)\circ\varphi=p\bigl(\underline{(x\circ\varphi)(t)},\underline{(y\circ\varphi)(t)}\bigr)\\
&=\Lambda(p)\bigl(\underline{x(t)}\circ\varphi,\underline{y(t)}\circ\varphi,\underline{\varphi(t)}\bigr)=\Lambda(p)(\underline{x},\underline{y},\underline{\varphi}).
\end{align*}
From this we conclude $p=\Lambda(p)$, which shows that $p$ is a geometric invariant of plane curves.
$(i)\Rightarrow (ii)$: This follows from the fact that reparametrizations act exactly in the same way as~$\Lambda$.
\end{proof}

\begin{lemma}\label{11}
For an element $p\in F$ we have the following two equalities
\begin{itemize} 
\item[(i)] $\chi(p)=\partial(p)\varphi^{(1)}$,
\item[(ii)] $\Lambda(\partial(p))=\chi(\Lambda(p)).$
\end{itemize} 
\end{lemma}

\begin{proof}
$(i)\colon$ As $\chi$ is a derivation and acts on the generators of $F$ by $\chi(x^{(i)})=\partial(x^{(i)})\varphi^{(1)}$ and $\chi(y^{(i)})=\partial(y^{(i)})\varphi^{(1)}$, the claimed equality follows.\\
$(ii)\colon$ Take an element $p\in F$. Since the maps $\Lambda, \partial$ and $\chi$ are additive, we may assume that $p$ is of the form 
\begin{align*}
p=\prod_{i\in I}x^{(i)}\prod_{j\in J}y^{(j)}
\end{align*}
with some index sets $I$ and $J$. A short computation shows then 
\begin{align*}
\Lambda(\partial(p))=&\Lambda\left(\sum_{i\in I}\bigl(x^{(i+1)}\prod_{k\in I\backslash\{i\}}x^{(k)}\prod_{j\in J}y^{(j)}\bigr)+\sum_{j\in J}\bigl(y^{(j+1)}\prod_{i\in I}x^{(k)}\prod_{k\in J\backslash\{j\}}y^{(k)}\bigr)\right)\\
=&\sum_{i\in I}\bigl(\chi^{(i+1)}(x^{(0)})\prod_{k\in I\backslash\{i\}}\chi^{(k)}(x^{(0)})\prod_{j\in J}\chi^{(j)}(y^{(0)})\bigr)+\\
&+\sum_{j\in J}\bigl(\chi^{(j+1)}(y^{(0)})\prod_{i\in I}\chi^{(k)}(x^{(0)})\prod_{k\in J\backslash\{j\}}\chi^{(k)}(y^{(0)})\bigr)\\
=&\chi\bigl(\prod_{i\in I}\chi^{(i)}(x^{(0)})\prod_{j\in J}\chi^{(j)}(y^{(0)})\bigr)=\chi\bigl(\Lambda(\prod_{i\in I}x^{(i)}\prod_{j\in J}y^{(j)})\bigr)=\chi(\Lambda(p)).\qedhere
\end{align*}
\end{proof}

Using the last proposition and lemma, we can construct a whole family of geometric invariants:

\begin{example}\label{ex}\normalfont The variables $x^{(0)},y^{(0)}$ are of course geometric invariants. But there are more interesting examples.
\begin{itemize}
\item[(1)] The slope of the tangent vector \[
s(t)=\frac{y'(t)}{x'(t)}
\]
of the parametrized curve $\gamma(t)$ is obviously equivariant under reparametrizations.
Hence, 
\[
\kappa_0\coloneqq \frac{y^{(1)}}{x^{(1)}}
\]
is a geometric invariant, called the \emph{slope} (\emph{of the tangent vector}).
\item[(2)] The formula for the classical curvature 
\[
\kappa(t)=\frac{|y''(t)x'(t)-y'(t)x''(t)|}{\sqrt{\bigl(x'(t)^2+y'(t)^2\bigr)^3}}
\]
does not yield a geometric invariant in our sense (although it is equivariant under reparametrizations) since we do not allow square roots in our definition. However, a little modification leads to the rational expression 
\[
\frac{y''(t)x'(t)-y'(t)x''(t)}{\bigl(x'(t)+y'(t)\bigr)^3}
\]
which is also equivariant under reparametrizations. Note that any linear combination $ax'(t)+by'(t)$ with $a\neq  0$ or $b\neq 0$ in the denominator also yields a geometric invariant. (For further computations it is convenient to choose $a=1,b=0.$) We set 
\[
\kappa_1\coloneqq\frac{y^{(2)}x^{(1)}-y^{(1)}x^{(2)}}{(x^{(1)})^3}.
\]
\item[(3)] The rational expressions 
\[
\kappa_{i+1}\coloneqq\frac{\partial(\kappa_i)}{x^{(1)}} \ \text{ for any } i\geq 1,
\]
are again geometric invariants. This follows directly from Lemma \ref{11}. 
\end{itemize}
\end{example}

The slope of the tangent vector $s(t)$ and the curvature $\kappa(t)$ of parametric plane and also space curves over $\mathbb{R}$ are well-known and standard notions in differential geometry. The classical literature (for instance, R.~Goldman in \cite{go05}, or M.~P.~do Carmo in \cite[Chapter 1, \S5]{ca76}) often refers to them as ``invariants (under reparametrizations)''. So the classical curvature $\kappa$ is the differential geometric analogue to the more algebraically defined geometric invariant $\kappa_1$. 

Since the geometric invariant $\kappa_1$ constructed above was derived from the formula for the classical curvature, we call $\kappa_1$ the \emph{first algebraic curvature} (\emph{of plane curves}) and $\kappa_i,i>1$ the \emph{higher algebraic curvatures} (\emph{of plane curves}). Notice that since $\Lambda$ is a field homomorphism, the geometric invariants of plane curves form a field. We set 
\[
I_F\coloneqq \text{field of geometric invariants (of plane curves)}.
\]
The algebraic curvatures do not only represent a family of geometric invariants, but even more, they generate the whole field $I_F$.

\begin{theorem}\label{12}
The field of geometric invariants of plane curves is generated over $\mathbb{C}$ by the variables $x^{(0)},y^{(0)}$, the slope and the first and higher algebraic curvatures, i.e., we have 
\[
I_F=\mathbb{C}(x^{(0)},y^{(0)}, \kappa_i \,:\, i\in\mathbb{N}).
\]
\end{theorem}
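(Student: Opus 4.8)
The plan is to prove the inclusion $\mathbb{C}(x^{(0)},y^{(0)},\kappa_i:i\in\mathbb{N})\subseteq I_F$ for free and to spend the effort on the reverse one. The first inclusion is immediate: $x^{(0)},y^{(0)}$ and all the $\kappa_i$ are geometric invariants by Example \ref{ex}, and the geometric invariants form a field.

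For the reverse inclusion I would first change the transcendence basis of $F$. Iterating $\kappa_{i+1}=\partial(\kappa_i)/x^{(1)}$ one proves by induction the shape
\[
\kappa_i=\frac{y^{(i+1)}}{(x^{(1)})^{i+1}}+r_i,\qquad r_i\in\mathbb{C}\bigl(x^{(1)},\dots,x^{(i+1)}\bigr)\bigl[y^{(1)},\dots,y^{(i)}\bigr].
\]
Hence the substitution $x^{(i)}\mapsto x^{(i)}$, $y^{(0)}\mapsto y^{(0)}$, $y^{(i+1)}\mapsto\kappa_i$ is lower triangular in the $y$-variables with invertible diagonal $(x^{(1)})^{-(i+1)}$; so it is an automorphism of $\mathbb{C}[x^{(i)},y^{(i)}:i\in\mathbb{N}][(x^{(1)})^{-1}]$ and therefore of $F$. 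In particular $\{x^{(i)}:i\in\mathbb{N}\}\cup\{y^{(0)}\}\cup\{\kappa_i:i\in\mathbb{N}\}$ is again a transcendence basis generating $F$. Since every $\kappa_i$ involves only variables of positive order and $\Lambda$ fixes $x^{(0)},y^{(0)}$, a routine argument on numerators and denominators reduces the claim to $I_{F_0}=K_0$, where $F_0\coloneqq\mathbb{C}(x^{(j)},y^{(j)}:j\ge1)=K_0(x^{(j)}:j\ge1)$, $K_0\coloneqq\mathbb{C}(\kappa_i:i\in\mathbb{N})$, and the $x^{(j)}$, $j\ge1$, are algebraically independent over $K_0$.

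Now let $p\in I_{F_0}$; write it as a rational function in finitely many $x^{(1)},\dots,x^{(k)}$ over $K_0$. Because the $\kappa_i$ are invariants, $\Lambda$ fixes $K_0$, so the identity $\Lambda(p)=p$ reads
\[
p\bigl(\chi^1(x^{(0)}),\dots,\chi^k(x^{(0)})\bigr)=p\bigl(x^{(1)},\dots,x^{(k)}\bigr)\qquad\text{in }L.
\]
The key move is to ``straighten the first coordinate'': introducing fresh variables $z^{(1)},z^{(2)},\dots$, one solves recursively for $\psi^{(i)}\in\mathbb{C}(x^{(1)},\dots,x^{(i)},z^{(1)},\dots,z^{(i)})$ such that the substitution $\varphi^{(i)}\mapsto\psi^{(i)}$ ($i\ge1$) sends $\chi^i(x^{(0)})$ to $z^{(i)}$; this is possible because $\chi$ mirrors the Faà di Bruno formula, so $\chi^i(x^{(0)})=x^{(1)}\varphi^{(i)}+(\text{terms in }\varphi^{(1)},\dots,\varphi^{(i-1)})$ with $x^{(1)}$ invertible. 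The same triangularity shows the $\psi^{(i)}$ and the $z^{(i)}$ generate the same field over $\mathbb{C}(x^{(j)}:j\ge1)$, so this substitution extends to a field isomorphism $\sigma\colon L=F(\varphi^{(i)})\xrightarrow{\sim}F(z^{(i)})$ that is the identity on $F$. Applying $\sigma$ to the displayed identity gives
\[
p\bigl(z^{(1)},\dots,z^{(k)}\bigr)=p\bigl(x^{(1)},\dots,x^{(k)}\bigr)\qquad\text{in }F(z^{(i)}).
\]
Since $x^{(1)},\dots,x^{(k)},z^{(1)},\dots,z^{(k)}$ are algebraically independent over $K_0$, a rational function in the $x$'s equal to the same rational function in the disjoint $z$'s must already lie in $K_0$ (clear denominators and use unique factorization in $K_0[x^{(\bullet)},z^{(\bullet)}]$). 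Hence $p\in K_0$, which gives $I_{F_0}=K_0$, and adjoining back $x^{(0)},y^{(0)}$ yields the theorem.

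I expect the third paragraph to be the heart of the matter: finding the correct ``slice'' $\sigma$ of the reparametrization action and verifying that it is an honest field isomorphism fixing $F$ --- this is the algebraic shadow of reparametrizing a curve so that its first component becomes the parameter $t$. The change-of-basis step is only bookkeeping, but the inductive identity for $\kappa_i$ must be stated carefully, and one has to keep track that every denominator that appears is a power of $x^{(1)}$.
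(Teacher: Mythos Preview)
Your argument is correct, and the overall architecture---rewrite $F$ as $K(x^{(j)}:j\ge1)$ with $K=\mathbb{C}(x^{(0)},y^{(0)},\kappa_i)$, then use the triangular shape $\chi^i(x^{(0)})=x^{(1)}\varphi^{(i)}+(\text{lower in }\varphi)$ to kill the $x^{(j)}$-dependence---matches the paper's first proof (your change of basis is Lemma~\ref{15}, your triangularity is Lemma~\ref{16}). One cosmetic point: the ``reduction to $I_{F_0}=K_0$'' is not really needed and is a little awkward as stated; your third paragraph already works verbatim with coefficients in $K$ rather than $K_0$, since $\Lambda$ fixes all of $K$ and $\sigma$ is the identity on $F\supseteq K$. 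Simply drop the $F_0,K_0$ interlude and run the straightening argument over $K$.

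Where you genuinely diverge from the paper is in the endgame. The paper's first proof uses the triangularity to show that $\{x^{(j)}:j\ge1\}$ is algebraically independent over $I_F$ (Proposition~\ref{17}) via a leading-$\varphi^{(m)}$-term argument, and then reads off $p\in J$ by comparing coefficients. You instead \emph{solve} the triangular system to build an explicit $F$-linear isomorphism $\sigma\colon F(\varphi^{(i)})\to F(z^{(i)})$ sending $\chi^i(x^{(0)})\mapsto z^{(i)}$, and conclude from $p(z)=p(x)$ with the $x$'s and $z$'s disjoint that $p\in K$. This is a clean specialization argument that avoids the minimal-counterexample bookkeeping of Proposition~\ref{17}. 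It is also, in effect, the purely algebraic incarnation of the paper's \emph{second} proof: there one chooses $\varphi$ to be the compositional inverse of $x(t)$ so that $x\circ\varphi=t$, which is exactly what your $\sigma$ does symbolically---but you do it without invoking D-algebraically independent power series. So your route sits neatly between the two proofs in the paper, borrowing the differential-field setup from the first and the ``straighten the first coordinate'' idea from the second.
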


We prove the theorem in two different manners. The first proof presented here uses differential field extensions. This proof was done in collaboration with J.~Schicho and M.~Gallet. The second proof was obtained in cooperation with A.~Bostan and follows a different strategy. It uses Proposition \ref{param} --- the correspondence between geometric invariants and rational expressions in parametrizations and their higher derivatives which are equivariant under reparametrizations --- and the existence of D-algebraically independent families of power series. The second proof will show us even more. Given a geometric invariant $p$, the second proof tells us how to find the representation of $p$ as a rational function in the generators $x^{(0)},y^{(0)}$ and $\kappa_i,i\in\mathbb{N}.$ 

Let us now start with the first proof. Set 
\[J\coloneqq\mathbb{C}(x^{(0)},y^{(0)},\kappa_i  \,:\, i\in\mathbb{N})\subseteq F\quad \text{ and }\quad \mathsf{X}\coloneqq\{x^{(i)}  \,:\, i\in\mathbb{N}, i\neq 0\}\subseteq F.
\]

\begin{lemma}\label{15}
The field $F$ is generated by $\mathsf{X}$ over the subfield $J$, i.e., $F=J(\mathsf{X}).$
\end{lemma}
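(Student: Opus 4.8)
The plan is to show that every generator of $F$ — namely $x^{(0)}$, $y^{(0)}$, and all $x^{(i)}, y^{(i)}$ for $i\geq 1$ — lies in the field $J(\mathsf{X})$. Since $x^{(0)}, y^{(0)}\in J\subseteq J(\mathsf{X})$ trivially, and each $x^{(i)}$ with $i\geq 1$ lies in $\mathsf{X}$ by definition, the only real content is to express each $y^{(i)}$, $i\geq 1$, as a rational function of the elements $x^{(0)}, y^{(0)}, \kappa_0, \kappa_1, \kappa_2,\dots$ together with $x^{(1)}, x^{(2)}, x^{(3)},\dots$. I would proceed by induction on $i$.

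For the base cases: from $\kappa_0 = y^{(1)}/x^{(1)}$ we immediately get $y^{(1)} = \kappa_0\, x^{(1)} \in J(\mathsf{X})$. From the definition $\kappa_1 = \bigl(y^{(2)}x^{(1)} - y^{(1)}x^{(2)}\bigr)/(x^{(1)})^3$ we solve for $y^{(2)}$, obtaining $y^{(2)} = \kappa_1 (x^{(1)})^2 + y^{(1)} x^{(2)}/x^{(1)}$, and since $y^{(1)}$ is already in $J(\mathsf{X})$, so is $y^{(2)}$. For the inductive step, I would use the recursive definition $\kappa_{i+1} = \partial(\kappa_i)/x^{(1)}$, i.e. $\partial(\kappa_i) = \kappa_{i+1} x^{(1)}$. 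The key observation is that, assuming $y^{(1)},\dots,y^{(i)}$ have all been expressed as rational functions in $J(\mathsf{X})$ — which by the earlier cases involve only $y^{(1)},\dots,y^{(i)}$, the $x^{(j)}$, and $\kappa_0,\dots,\kappa_{i-1}$ — one can write $\kappa_i$ itself as a rational function of these same quantities, and therefore $\partial(\kappa_i)$ is a rational function of $x^{(1)},\dots,x^{(i+1)}$, $y^{(1)},\dots,y^{(i+1)}$, and $\kappa_0,\dots,\kappa_i$ (the derivative raises the top index by one). Solving the linear equation $\partial(\kappa_i) = \kappa_{i+1}x^{(1)}$ for $y^{(i+1)}$ — it appears linearly, with coefficient a nonzero monomial in $x^{(1)}$ coming from the $\partial(y^{(i)})$ term — and substituting the inductively known expressions for $y^{(1)},\dots,y^{(i)}$ yields $y^{(i+1)}\in J(\mathsf{X})$.

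The one point needing care is verifying that $y^{(i+1)}$ genuinely occurs in $\partial(\kappa_i)$ and occurs \emph{linearly} with an invertible coefficient, so that one can actually solve for it; this is where I expect the main bookkeeping to lie. Concretely, one checks by a secondary induction that $\kappa_i$ has the form $\bigl(y^{(i+1)}/x^{(1)} + (\text{terms in }x^{(1)},\dots,x^{(i)},y^{(1)},\dots,y^{(i)})\bigr)\cdot(\text{unit})$ — more precisely that $\kappa_i = y^{(i+1)}/(x^{(1)})^{i+1} + R_i$ where $R_i$ is a rational function not involving $y^{(i+1)}$ — so that $\partial(\kappa_i) = y^{(i+2)}/(x^{(1)})^{i+1} + (\text{lower-order in the }y\text{'s})$, and hence $\kappa_{i+1}x^{(1)} = y^{(i+2)}/(x^{(1)})^{i+1} + \cdots$, which after reindexing gives $y^{(i+1)}$ as a rational function with denominator a power of $x^{(1)}$ times already-known quantities. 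This leading-term claim follows directly from the definitions of $\kappa_0$, $\kappa_1$ and the recursion $\kappa_{i+1}=\partial(\kappa_i)/x^{(1)}$, since differentiation sends $y^{(i+1)}/(x^{(1)})^{i+1}$ to $y^{(i+2)}/(x^{(1)})^{i+1}$ plus terms with strictly lower $y$-index, and dividing by $x^{(1)}$ restores the exponent pattern. Having established this, the induction closes and $F = J(\mathsf{X})$ follows.
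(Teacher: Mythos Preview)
Your proposal is correct, and the underlying idea---an induction showing that each $y^{(i)}$ lies in $J(\mathsf{X})$---is the same as the paper's. The difference is in packaging: you explicitly track the leading term $\kappa_i = y^{(i+1)}/(x^{(1)})^{i+1} + R_i$ and solve for $y^{(i+1)}$ at each step, which involves some bookkeeping about which variables appear where. The paper avoids this computation entirely by observing that $J(\mathsf{X})$ is closed under the derivation $\partial$: indeed $\partial(\kappa_i)=\kappa_{i+1}x^{(1)}\in J(\mathsf{X})$, $\partial(x^{(i)})=x^{(i+1)}\in\mathsf{X}$, $\partial(x^{(0)})=x^{(1)}\in\mathsf{X}$, and $\partial(y^{(0)})=y^{(1)}=\kappa_0 x^{(1)}\in J(\mathsf{X})$, so all generators have derivatives in $J(\mathsf{X})$. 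From $y^{(0)}\in J(\mathsf{X})$ and $\partial$-closure one gets $y^{(i)}=\partial^i(y^{(0)})\in J(\mathsf{X})$ for all $i$ in one stroke.

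Your explicit approach has the mild advantage of producing concrete formulas for $y^{(i)}$ in terms of the $\kappa_j$'s and $x^{(j)}$'s, but the paper's $\partial$-closure argument is considerably shorter and sidesteps the leading-term verification altogether.
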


\begin{proof}
From $\partial(\kappa_i)=\kappa_{i+1} x^{(1)}\in J(\mathsf{X})$ and $\partial(\mathsf{X})\subseteq \mathsf{X}$, it follows that $J(\mathsf{X})$ is closed under $\partial$, i.e., $\partial(J(\mathsf{X}))\subseteq J(\mathsf{X})$. Further, since $x^{(0)},y^{(0)}\in J(\mathsf{X})$, all higher derivatives $x^{(i)},y^{(i)},i\geq 1$ are elements of $J(\mathsf{X})$ as well. Hence, the generators of $F$ lie in $J(\mathsf{X})$ and so $F\subseteq J(\mathsf{X})$. Since $J(\mathsf{X})\subseteq F$ by construction, the statement is proven.
\end{proof}

For each $i\geq 1$ we set $V_i\coloneqq F[\varphi^{(j)}  \,:\, 1\leq j\leq i]\subseteq L$. Notice that for each $i$ we have the inclusion $\chi(V_i)\subseteq V_{i+1}.$

\begin{lemma}\label{16}
For each $i\geq 1$ we have $\Lambda(x^{(i)})-\varphi^{(i)}x^{(1)} \in V_{i-1}$.
\end{lemma}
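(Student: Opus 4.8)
The plan is to prove the statement by induction on $i$, using Lemma \ref{11}(ii) to pass from level $i$ to level $i+1$ and the inclusion $\chi(V_i)\subseteq V_{i+1}$ to keep track of which $\varphi$-variables occur.

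For the base case $i=1$, I would compute directly from the definitions: $\Lambda(x^{(1)})=\chi(x^{(0)})=x^{(1)}\varphi^{(1)}$, so $\Lambda(x^{(1)})-\varphi^{(1)}x^{(1)}=0$, which lies in $V_0=F$. For the inductive step, assume $\Lambda(x^{(i)})-\varphi^{(i)}x^{(1)}=v$ with $v\in V_{i-1}$. Since $\partial(x^{(i)})=x^{(i+1)}$, Lemma \ref{11}(ii) yields $\Lambda(x^{(i+1)})=\Lambda(\partial(x^{(i)}))=\chi(\Lambda(x^{(i)}))=\chi\bigl(\varphi^{(i)}x^{(1)}+v\bigr)$. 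Expanding with the Leibniz rule for $\chi$ and the formulas $\chi(\varphi^{(i)})=\varphi^{(i+1)}$ and $\chi(x^{(1)})=x^{(2)}\varphi^{(1)}$ gives
\[
\Lambda(x^{(i+1)})=\varphi^{(i+1)}x^{(1)}+\varphi^{(i)}x^{(2)}\varphi^{(1)}+\chi(v).
\]
It then suffices to observe that $\varphi^{(i)}x^{(2)}\varphi^{(1)}\in V_i$ (the only $\varphi$-variables appearing are $\varphi^{(1)}$ and $\varphi^{(i)}$, of indices $1$ and $i$, both $\le i$, using $i\ge 1$) and that $\chi(v)\in\chi(V_{i-1})\subseteq V_i$; hence $\Lambda(x^{(i+1)})-\varphi^{(i+1)}x^{(1)}\in V_i$, completing the induction.

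The computation is essentially bookkeeping, and the only point requiring a little care — the place I would expect to be the main (mild) obstacle — is verifying at each step that the error term produced by the Leibniz rule really stays inside the filtration piece $V_i$, i.e.\ that no $\varphi$-variable of index exceeding $i$ is introduced. For the $\chi(v)$ contribution this is exactly the inclusion $\chi(V_{i-1})\subseteq V_i$ noted before the lemma, and for the remaining monomial $\varphi^{(i)}x^{(2)}\varphi^{(1)}$ it follows by direct inspection.
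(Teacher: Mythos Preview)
Your proof is correct and follows essentially the same route as the paper's: induction on $i$ with the base case $\Lambda(x^{(1)})=x^{(1)}\varphi^{(1)}$, and in the inductive step Lemma~\ref{11}(ii) together with the Leibniz rule for $\chi$ to obtain $\Lambda(x^{(i+1)})=\varphi^{(i+1)}x^{(1)}+\varphi^{(i)}\varphi^{(1)}x^{(2)}+\chi(v)$, finishing with the observation that the last two terms lie in $V_i$. Your version is in fact slightly more explicit than the paper's about why each leftover term stays in $V_i$.
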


\begin{proof}
We proceed by induction. The induction base follows immediately from 
\[
\Lambda(x^{(1)})=\varphi^{(1)}x^{(1)}.
\]
For $i\geq 2$ we use Lemma \ref{11} and obtain 
\begin{align*}
\Lambda(x^{(i+1)})-\varphi^{(i+1)}x^{(1)}&=\Lambda\bigl(\partial(x^{(i)})\bigr)-\varphi^{(i+1)}x^{(1)}=\chi\bigl(\Lambda(x^{(i)})\bigr)-\varphi^{(i+1)}x^{(1)}.
\end{align*}
According to the induction hypothesis, we have $\Lambda(x^{(i)})=\varphi^{(i)}x^{(1)}+v_{i-1}$ for some element $v_{i-1}\in V_{i-1}$. Thus, we can express
\begin{align*}
\chi\bigl(\Lambda(x^{(i)})\bigr)-\varphi^{(i+1)}x^{(1)}=\chi(\varphi^{(i)}x^{(1)}+ v_{i-1})-\varphi^{(i+1)}x^{(1)}=\varphi^{(i)}\varphi^{(1)}x^{(2)}+ \chi(v_{i-1}).
\end{align*}
But $\varphi^{(i)}\varphi^{(1)}x^{(2)}+ \chi(v_{i-1})\in V_i$ and the claim follows.
\end{proof}

\begin{proposition}\label{17}
The set $\mathsf{X}$ is algebraically independent over $I_F$.
\end{proposition}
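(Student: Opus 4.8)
The plan is to show that the countably many variables $\mathsf{X}=\{x^{(i)} : i\geq 1\}$ are algebraically independent over the subfield $I_F$ of geometric invariants, exploiting the reparametrization symbol $\varphi$ and the homomorphism $\Lambda$. The key mechanism is Lemma \ref{16}: it says that $\Lambda(x^{(i)})$ differs from $\varphi^{(i)}x^{(1)}$ by an element of $V_{i-1}=F[\varphi^{(j)}:1\le j\le i-1]$, so the $\varphi^{(i)}$ behave, up to a triangular change of coordinates, like the $\Lambda$-images of the $x^{(i)}$. First I would set up the following: suppose there were a nontrivial polynomial relation $P(x^{(1)},\dots,x^{(m)})=0$ with coefficients in $I_F$. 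The idea is to apply $\Lambda$ to this relation. Since $\Lambda$ fixes $I_F$ pointwise (that is exactly the definition of $I_F$), the relation becomes $P\bigl(\Lambda(x^{(1)}),\dots,\Lambda(x^{(m)})\bigr)=0$, a polynomial identity with the \emph{same} $I_F$-coefficients, now holding among the elements $\Lambda(x^{(1)}),\dots,\Lambda(x^{(m)})$ of $L$.

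Next I would invoke Lemma \ref{16} to rewrite each $\Lambda(x^{(i)})$ as $\varphi^{(i)}x^{(1)}+v_{i-1}$ with $v_{i-1}\in V_{i-1}$. Substituting this into the identity and expanding, one obtains a polynomial relation, with coefficients in $I_F\subseteq F$, among the variables $\varphi^{(1)},\dots,\varphi^{(m)}$ over $F$. But the $\varphi^{(j)}$ were introduced as genuinely new variables, algebraically independent over $F$ (they generate $L=F(\varphi^{(i)}:i\in\mathbb N)$ as a purely transcendental extension). Hence every coefficient of this relation, viewed as a polynomial in the $\varphi^{(j)}$, must vanish. The triangular shape from Lemma \ref{16} — $\varphi^{(i)}$ appears in $\Lambda(x^{(i)})$ with coefficient $x^{(1)}$ and all lower-order corrections sit in $V_{i-1}$ — ensures that the leading part of $P\bigl(\Lambda(x^{(1)}),\dots,\Lambda(x^{(m)})\bigr)$, with respect to a suitable monomial order on the $\varphi^{(j)}$, is precisely $P(x^{(1)},\dots,x^{(m)})$ times a power of $x^{(1)}$. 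Tracking this leading term, the vanishing of the relation in the $\varphi$-variables forces $P(x^{(1)},\dots,x^{(m)})\cdot(x^{(1)})^N=0$ in $F$ for some $N$, hence $P(x^{(1)},\dots,x^{(m)})=0$ already in $F$.

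Finally, $x^{(1)},\dots,x^{(m)}$ are among the free generators of $F=\mathbb C(x^{(i)},y^{(i)}:i\in\mathbb N)$, so a polynomial in them with coefficients in $I_F\subseteq F$ cannot vanish unless it is the zero polynomial; this contradicts the assumed nontriviality of $P$. Therefore $\mathsf X$ is algebraically independent over $I_F$. I expect the main obstacle to be the bookkeeping in the substitution step: one must choose the right term order on the monomials in $\varphi^{(1)},\dots,\varphi^{(m)}$ so that the Lemma \ref{16} corrections $v_{i-1}\in V_{i-1}$ are genuinely lower order, and then argue cleanly that extracting the leading coefficient recovers $P$ itself rather than some twisted version of it. Once the triangularity is organized correctly, the algebraic independence of the $\varphi^{(j)}$ over $F$ does the rest.
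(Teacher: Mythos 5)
Your overall mechanism is the same as the paper's (apply $\Lambda$ to a putative relation, substitute $\Lambda(x^{(i)})=\varphi^{(i)}x^{(1)}+v_{i-1}$ via Lemma \ref{16}, and use that the $\varphi^{(j)}$ are transcendental over $F$), but the two steps you yourself flag as delicate are exactly where the argument breaks. First, the leading coefficient you extract is not what you claim. Writing $P=\sum_\beta c_\beta\, w_1^{\beta_1}\cdots w_m^{\beta_m}$ with $c_\beta\in I_F$ and ordering $\varphi$-monomials lexicographically with $\varphi^{(m)}\gg\cdots\gg\varphi^{(1)}$, the coefficient of the leading monomial $(\varphi^{(1)})^{\alpha^*_1}\cdots(\varphi^{(m)})^{\alpha^*_m}$ (for $\alpha^*$ the lex-largest exponent with $c_{\alpha^*}\neq 0$) is $c_{\alpha^*}\,(x^{(1)})^{|\alpha^*|}$ --- a \emph{single} coefficient of $P$ times a power of $x^{(1)}$, not ``$P(x^{(1)},\dots,x^{(m)})$ times a power of $x^{(1)}$''. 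The latter quantity is zero by the very hypothesis $P(x^{(1)},\dots,x^{(m)})=0$, so your deduction ``$P(x^{(1)},\dots,x^{(m)})\cdot(x^{(1)})^N=0$, hence $P(x^{(1)},\dots,x^{(m)})=0$ in $F$'' merely re-derives the assumption and produces no contradiction. Second, your closing step is circular: the assertion that a polynomial in $x^{(1)},\dots,x^{(m)}$ with coefficients in $I_F$ cannot vanish unless it is the zero polynomial \emph{is} Proposition \ref{17}. The $x^{(i)}$ are indeed algebraically independent over $\mathbb{C}(y^{(j)}\,:\,j\in\mathbb{N})$, but elements of $I_F$ such as $\kappa_0=y^{(1)}/x^{(1)}$ or $\kappa_1$ are rational functions involving the $x^{(i)}$ themselves, so no freeness argument is available; ruling out such relations is precisely what must be proved.

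The repair is small and stays inside your framework. After the substitution, the expression is a polynomial in $\varphi^{(1)},\dots,\varphi^{(m)}$ with coefficients in $F$ (not in $I_F$: they involve $x^{(1)}$ and the coefficients of the $v_{i-1}$). Since $v_{i-1}$ contains no $\varphi^{(j)}$ with $j\geq i$, the only way to produce the monomial $(\varphi^{(1)})^{\alpha^*_1}\cdots(\varphi^{(m)})^{\alpha^*_m}$ is to take the pure term $\varphi^{(i)}x^{(1)}$ from every factor of the $\alpha^*$-summand (argue from $i=m$ downwards, using lex-maximality of $\alpha^*$ at each stage); hence its coefficient is exactly $c_{\alpha^*}(x^{(1)})^{|\alpha^*|}$, and its vanishing forces $c_{\alpha^*}=0$, contradicting $P\neq 0$ directly --- no appeal to independence over $I_F$ is needed, and as a bonus this version avoids the paper's use of injectivity of $\Lambda$. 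Alternatively, you can avoid the multi-index bookkeeping altogether, as the paper does: take $m$ minimal, expand $g=\sum_j\alpha_j(w')w_m^j$ in the last variable only, compare coefficients of powers of $\varphi^{(m)}$ over $F[\varphi^{(1)},\dots,\varphi^{(m-1)}]$ to get $\Lambda\bigl(\alpha_j(\mathsf{X})\bigr)=0$, and then use injectivity of $\Lambda$ together with minimality of $m$ to conclude $g=0$.
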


\begin{proof}
Let us assume indirectly that $\mathsf{X}$ is algebraically dependent over $I_F$. Let $m\in \mathbb{N}$ be the minimal positive integer such that there exists a polynomial in $m$ variables 
\[
g(w)\in I_F[w]=I_F[w_1,\dots,w_m],g(w)\neq 0 \ \text{ with }\ g(\mathsf{X})\coloneqq g(x^{(1)},\dots,x^{(m)})=0.
\]
Let us denote by $w'$ the vector $(w_1,\dots,w_{m-1})$ and write 
\[
g(w)=\sum_j \alpha_j(w')w_m^j
\]
as a polynomial in $w_m$ with coefficients in $I_F[w']=I_F[w_1,\dots,w_{m-1}]$. Set 
\[
\beta_j:=\Lambda\bigl(\alpha_j(\mathsf{X})\bigr)=\Lambda\bigl(\alpha_j(x^{(1)},\dots,x^{(m-1)})\bigr).
\]
Then we get the equality 
\[
0=\Lambda\bigl(g(\mathsf{X})\bigr)=\sum_j\beta_j\cdot\bigl(\Lambda(x^{(m)})\bigr)^j.
\]
Notice that $\beta_j\in V_{m-1}$ and with Lemma \ref{16} it holds also $\Lambda(x^{(m)})-\varphi^{(m)}x^{(1)}\in V_{m-1}$. All this applied to the above equality gives us
\[
V_{m-1}\ni \sum_j\beta_j\cdot\bigl((\Lambda(x^{(m)}))^j-(\varphi^{(m)})^j(x^{(1)})^j\bigr)=0-\sum_j\beta_j\cdot(\varphi^{(m)})^j(x^{(1)})^j,
\]
whence follows $\beta_j=0$ for all $j\geq 1$ because no power $\varphi^{(m)}$ belongs to $V_{m-1}$.  Since $\Lambda$ is a field homomorphism, it is injective, so $\alpha_j(\mathsf{X})=0$ for all $j\geq 1$. But as $m$ was chosen minimal, we have $\alpha_j(w')=0$ for any $j\geq 1$. Further, as $g(\mathsf{X})=0$, $g$ cannot have a constant term and therefore $g(w)=0$, a contradiction.
\end{proof}

\begin{proof}[First Proof of Theorem \ref{12}]
The inclusion $J\subseteq I_F$ is clear. It thus remains to show the inclusion $I_F\subseteq J.$ Let $p\in I_F$ be a geometric invariant of plane curves. Since $I_F$ is generated as a field over $J$ by $\mathsf{X}$, there exist $f,g\in J[w]=J[w_1,\dots,w_n], g(w)\neq 0$, polynomials in $n$ variables for some $n\in\mathbb{N}$, with $p=\frac{f(\mathsf{X})}{g(\mathsf{X})}$. From $J\subseteq I_F$ and from Proposition \ref{17} we conclude that $0=f(w)-pg(w)\in I_F[w].$ Now, as $g(w)\neq 0$, the comparison of coefficients in the equality $0=f(w)-pg(w)$  yields $p\in \Quot(J)=J$ which finishes the proof.
\end{proof}

Now we move to the second and more geometric proof of Theorem \ref{12}. Consider the $\mathbb{C}$-morphism
\begin{align*}
i_{\kappa}\colon F&\rightarrow F\\
x^{(0)}&\mapsto x^{(0)}, x^{(1)}\mapsto 1,x^{(i)}\mapsto 0\ \text{  for all } i\geq 2,\\
y^{(0)}&\mapsto y^{(0)}, y^{(i)}\mapsto \kappa_{i-1}\ \text{ for all } i\geq 1.
\end{align*}
The goal is to prove that each geometric invariant stays invariant under $i_\kappa$.

\begin{proposition}\label{13}
For each geometric invariant of plane curves $p\in I_F$ we have the following equality 
\[
p=i_{\kappa}(p).
\] 
\end{proposition}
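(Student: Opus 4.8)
The plan is to show that $i_\kappa$ fixes every geometric invariant by exploiting the fact, established in the first proof of Theorem \ref{12}, that $I_F = J = \mathbb{C}(x^{(0)}, y^{(0)}, \kappa_i : i \in \mathbb{N})$ together with the algebraic independence of $\mathsf{X} = \{x^{(i)} : i \geq 1\}$ over $I_F$ (Proposition \ref{17}). So it suffices to check that $i_\kappa$ fixes each of the generators $x^{(0)}, y^{(0)}, \kappa_i$; since $i_\kappa$ is a $\mathbb{C}$-morphism and the $\kappa_i$ lie in $F$, this will extend to all rational expressions in the generators, hence to all of $I_F$.

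For $x^{(0)}$ and $y^{(0)}$ this is immediate from the definition of $i_\kappa$. For $\kappa_0 = y^{(1)}/x^{(1)}$ we get $i_\kappa(\kappa_0) = \kappa_0/1 = \kappa_0$ directly. For $\kappa_1 = \bigl(y^{(2)}x^{(1)} - y^{(1)}x^{(2)}\bigr)/(x^{(1)})^3$ we compute $i_\kappa(\kappa_1) = (\kappa_1 \cdot 1 - \kappa_0 \cdot 0)/1 = \kappa_1$. The main content is the inductive step for $\kappa_{i+1} = \partial(\kappa_i)/x^{(1)}$, $i \geq 1$. The key observation is that $i_\kappa$ does not commute with $\partial$ on all of $F$, so I cannot simply push $i_\kappa$ through the recursion; instead I would prove the auxiliary identity
\[
i_\kappa(\partial(q)) = x^{(1)} \cdot \partial\bigl(i_\kappa(q)\bigr) \quad \text{whenever } i_\kappa(q) \text{ has already been computed and the chain works out,}
\]
more precisely I would argue that for the specific elements $q = \kappa_i$ one has $i_\kappa(\partial(\kappa_i)) = x^{(1)}\,\partial(\kappa_i)$, so that $i_\kappa(\kappa_{i+1}) = i_\kappa(\partial(\kappa_i))/i_\kappa(x^{(1)}) = x^{(1)}\partial(\kappa_i)/1 \cdot (1/x^{(1)})$ — wait, this needs care. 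Let me restate: the clean route is to show by induction on $i$ that $i_\kappa(\kappa_i) = \kappa_i$ and simultaneously that $i_\kappa(\partial(\kappa_i)) = x^{(1)} \kappa_{i+1} = \partial(\kappa_i)$, which follows because $\partial(\kappa_i)$, being a polynomial in the $x^{(j)}, y^{(j)}$, gets sent by $i_\kappa$ to the corresponding expression with $x^{(1)} \mapsto 1$, higher $x$-derivatives killed, and $y^{(j)} \mapsto \kappa_{j-1}$; one then checks term by term, using the explicit shape of $\partial(\kappa_i)$ as a rational function with denominator a power of $x^{(1)}$, that this substitution reproduces $\partial(\kappa_i)$ itself because $\kappa_i$ is already a geometric invariant and the substitution $i_\kappa$ is, on invariants, compatible with $\partial$ up to the factor $x^{(1)}$.

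The step I expect to be the genuine obstacle is making rigorous the interaction between $i_\kappa$ and $\partial$: unlike $\Lambda$ (for which Lemma \ref{11}(ii) gives $\Lambda \circ \partial = \chi \circ \Lambda$), the morphism $i_\kappa$ is a substitution that does not respect $\partial$ globally, so the naive induction "$i_\kappa(\kappa_{i+1}) = i_\kappa(\partial(\kappa_i))/i_\kappa(x^{(1)})$ and then commute" is not directly available. The way around this is to avoid differentiating through $i_\kappa$ altogether: instead, use that $\kappa_{i+1} \in I_F$ by Example \ref{ex}(3), then invoke the equality $I_F = J$ from Theorem \ref{12} to write $\kappa_{i+1}$ as a rational function $R(x^{(0)}, y^{(0)}, \kappa_0, \kappa_1, \dots)$ — but $\kappa_{i+1}$ is of course one of the generators, so this is circular. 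The honest resolution: prove the proposition directly by establishing, for an arbitrary $p \in I_F$ written (via Theorem \ref{12}) as $p = f(x^{(0)}, y^{(0)}, \kappa_0, \dots, \kappa_N)/g(\dots)$ with $f, g$ polynomials over $\mathbb{C}$, that $i_\kappa(p) = f(i_\kappa(x^{(0)}), \dots, i_\kappa(\kappa_N))/g(\dots)$ and then that each $i_\kappa(\kappa_j) = \kappa_j$; and this last fact for all $j$ at once I would get not by induction-through-$\partial$ but by the following device: each $\kappa_j$, being a geometric invariant, equals $\Lambda(\kappa_j)$, and one checks that $i_\kappa$ factors as "specialize $\varphi$" composed with $\Lambda$ — concretely, $i_\kappa(q)$ is obtained from $\Lambda(q) \in L$ by the $F$-algebra specialization $\varphi^{(1)} \mapsto 1/x^{(1)}$, $\varphi^{(i)} \mapsto$ (the value forcing $\partial_t^i(x\circ\varphi) = 0$ for $i\geq 2$), followed by reading off coefficients; since $\Lambda(\kappa_j) = \kappa_j$ depends on no $\varphi^{(i)}$, the specialization leaves it untouched, giving $i_\kappa(\kappa_j) = \kappa_j$. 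Verifying this factorization of $i_\kappa$ through $\Lambda$ and a $\varphi$-specialization is the technical heart of the argument.
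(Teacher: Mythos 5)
The strategy you finally settle on --- factor $i_\kappa$ as ``apply $\Lambda$, then specialize the $\varphi^{(i)}$ to the jet of the compositional inverse of $x$'' --- is essentially the paper's own argument in symbolic disguise (the paper realizes it with genuine power series: D-algebraically independent $x(t),y(t)$ with $\ord(x(t))=1$ and $\varphi$ the compositional inverse of $x(t)$). But in your write-up the decisive step is only announced, never carried out: you end with ``verifying this factorization of $i_\kappa$ through $\Lambda$ and a $\varphi$-specialization is the technical heart of the argument,'' and that verification \emph{is} the content of Proposition \ref{13}. Concretely, if $\sigma$ denotes the specialization with $\sigma(\varphi^{(1)})=1/x^{(1)}$ and $\sigma(\varphi^{(i)})$ chosen recursively so that $\sigma(\Lambda(x^{(i)}))=0$ for $i\geq 2$, then $\sigma\circ\Lambda$ agrees with $i_\kappa$ on the $x$-variables by construction; the nontrivial claim is that this \emph{same} specialization sends $\Lambda(y^{(i)})=\chi^i(y^{(0)})$ to $\kappa_{i-1}$, i.e.\ agrees with $i_\kappa$ on the $y$-variables. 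Nothing you wrote establishes this, and without it you have only the hand checks for $\kappa_0,\kappa_1$. (Your abandoned intermediate identities, e.g.\ $i_\kappa(\partial(\kappa_i))=x^{(1)}\partial(\kappa_i)$ or $i_\kappa(\partial(\kappa_i))=\partial(\kappa_i)$, are in fact false: $i_\kappa(\partial(\kappa_i))=i_\kappa(x^{(1)}\kappa_{i+1})=i_\kappa(\kappa_{i+1})$, which cannot equal $x^{(1)}\kappa_{i+1}$; so abandoning that route was correct, and it confirms that some genuine argument about how $i_\kappa$ interacts with $\partial$ cannot be avoided.)

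Two ways to close the gap. (a) The paper's route: using Proposition \ref{param}, take $x(t),y(t)$ D-algebraically independent with $\ord(x(t))=1$ and let $\varphi$ be the compositional inverse of $x(t)$; a straightforward chain-rule induction gives $\partial_t(x\circ\varphi)=1$, $\partial_t^i(x\circ\varphi)=0$ for $i\geq 2$, and $\partial_t^i(y\circ\varphi)=\kappa_{i-1}(\underline{x(t)},\underline{y(t)})\circ\varphi$; composing equality (\ref{1}) with $\varphi^{-1}$ then yields $\bigl(p-i_\kappa(p)\bigr)(\underline{x(t)},\underline{y(t)})=0$, and D-algebraic independence lifts this to $p=i_\kappa(p)$ in $F$. (b) A purely symbolic version of your device: define $\sigma(\varphi^{(1)})=1/x^{(1)}$ and $\sigma(\varphi^{(i+1)})=\partial\bigl(\sigma(\varphi^{(i)})\bigr)/x^{(1)}$, verify on the generators $x^{(j)},y^{(j)},\varphi^{(j)}$ the intertwining identity $\sigma\circ\chi=\tfrac{1}{x^{(1)}}\,\partial\circ\sigma$ (both sides are derivations along the ring map $\sigma$, so checking generators suffices), and then a one-line induction gives $\sigma(\chi^i(x^{(0)}))=1$ for $i=1$ and $0$ for $i\geq2$, and $\sigma(\chi^i(y^{(0)}))=\kappa_{i-1}$, i.e.\ $\sigma\circ\Lambda=i_\kappa$. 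Note also that once the factorization is in hand your detour through Theorem \ref{12} is unnecessary: for any $p\in I_F$ one gets directly $i_\kappa(p)=\sigma(\Lambda(p))=\sigma(p)=p$, since $\sigma$ is the identity on $F$. Indeed the paper's logical order is the reverse of yours --- Proposition \ref{13} is proved first and Theorem \ref{12} is deduced from it --- so quoting the first proof of Theorem \ref{12} is not circular, but it buys you nothing.
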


Once Proposition \ref{13} is proven, the statement of Theorem \ref{12} follows immediately. Let us mention that the key argument in the following proof is inspired by the idea used by J.-P.~Demailly in the proof of \cite[Theorem 6.8]{de97}.

\begin{proof}[Second Proof of Theorem \ref{12} and Proposition \ref{13}]
Let $p\in I_F$ be a geometric invariant of plane curves. Then for all power series $x(t),y(t)\in\mathbb{C}\ps{t}$, equality (\ref{1}) is satisfied by $p$. Let us choose $x(t)$ and $y(t)$ to be D-algebraically independent and such that $\ord(x(t))=1$. Denote by $\varphi(t)$ the compositional inverse of $x(t)$, i.e., the power series satisfying $(x\circ\varphi)(t)=t$. Applying the chain rule to the composition $x\circ\varphi$ gives 
\[
\varphi'(t)=\frac{1}{x'(\varphi(t))}
\] 
and so for the first derivative of $y\circ\varphi$ we have the equality
\[
(y\circ\varphi)'(t)=\frac{y'(\varphi(t))}{x'(\varphi(t))}=\kappa_0(\underline{x(t)},\underline{y(t)})\circ\varphi(t).
\]
For the higher derivatives, with the iterated chain rule, by induction we get 
\[
\partial^i_t(y\circ\varphi)=\kappa_{i-1}\bigl(\underline{x(t)},\underline{y(t)}\bigr)\circ\varphi(t)\ \text{ for all } i\geq 2.
\] 
Further we have
\begin{align*}
&\partial_t(x\circ\varphi)(t)=1,\\
&\partial_t^i(x\circ\varphi)(t)=0\ \text{ for all } i\geq 2.
\end{align*}
For $x(t),y(t)$ and $\varphi(t)$ as above,  equality (\ref{1}) composed with $\varphi^{-1}$ on both sides becomes then
\begin{align*}
p(\underline{x(t)},\underline{y(t)})&=p(\underline{x(t)},\underline{y(t)})\circ(\varphi\circ\varphi^{-1})(t)\\
&=p\bigl(\underline{(x\circ\varphi)(t)},\underline{(y\circ\varphi)(t)}\bigr)\circ\varphi^{-1}(t)\\
&=i_\kappa(p)(\underline{x(t)},\underline{y(t)})\circ(\varphi\circ\varphi^{-1})(t)\\
&=i_\kappa(p)(\underline{x(t)},\underline{y(t)})
\end{align*}
and thus,
\[
\bigl(p-i_\kappa(p)\bigr)(\underline{x(t)},\underline{y(t)})=0.
\]
But since $x(t),y(t)$ were chosen to be D-algebraically independent, it follows 
\[
p-i_\kappa(p)=0,
\] 
which finishes the proof.
\end{proof}

So for a given geometric invariant of plane curves $p\in I_F$, Theorem \ref{12} ensures that it can be written as a rational function in $x^{(0)},y^{(0)}$, the slope and algebraic curvatures and Proposition \ref{13} explains how to construct such a rational function. Namely one can replace each $x^{(1)}$ by $1$, $x^{(i)}$, for $i\geq 2$, by $0$ and each $y^{(i)}$, with $i\geq 1$ by $\kappa_{i-1}$ to obtain the required representation of $p$ as a rational function in the generators $x^{(0)},y^{(0)},\kappa_i$, with $i\in\mathbb{N}$, of $I_F$.\\
\vspace{3mm}

\centerline{\textbf{Implicit Expressions of Geometric Invariants of Plane Curves}}

\vspace{3mm}
As next, we will discuss the interaction between the parametric and implicit representation of plane curves and its impact on (the implicit formulas for) geometric invariants. The equation (\ref{1}) shows that for a parametrized curve $\gamma(t)=(x(t),y(t))$, each geometric invariant yields a geometric quantity which does not depend on a chosen parametrization. Hence, it should be possible to describe each such quantity given by a geometric invariant also without using local parametrizations of a plane algebraic curve, namely, just by its defining implicit equation. In the remaining part of this section, we prove that such a description in terms of the defining implicit equation is always possible, and, moreover, we provide also implicit formulas for the slope and algebraic curvatures, the generators of the field of geometric invaraints. Once their implicit expressions are known, one is able to find an implicit expression for an arbitrary geometric invariant.

Consider a plane algebraic curve $X=V(f),f\in\mathbb{C}[x,y],$ defined by a square-free polynomial $f$ with $f(0,0)=0$. We assume that $0$ is a smooth point of $X$. Thus $X$ is unibranch at the origin. Let us w.l.o.g~assume $f_y(0)\neq 0$. Let $\gamma(t)$ be a parametrization of $X$ at the origin, i.e., $\gamma(t)=(x(t),y(t))\in\mathbb{C}\ps{t}^2$ a pair of power series for which the ring map 
\begin{align*}
\gamma^*\colon\mathbb{C}\ps{x,y}/(f)&\rightarrow \mathbb{C}\ps{t}\\
x&\mapsto x(t)\\
y&\mapsto y(t)
\end{align*}
is injective and $\gamma(0)=0$. Differentiating now both sides of the equality $f(x(t),y(t))=0$ with respect to $t$  gives us 
\begin{align}\label{gamma}
f_x(x(t),y(t))\cdot x'(t)+f_y(x(t),y(t))\cdot y'(t)=0.
\end{align}
Notice, that from $f_y(0)\neq 0$ it follows that $f_y(\gamma(t))\neq 0$. From equality (\ref{gamma}), we immediately see 
\begin{itemize}
\item[(1)]
$\displaystyle \kappa_0(\underline{x(t)},\underline{y(t)})=-\frac{f_x}{f_y}(x(t),y(t))$,
\item[(2)]
$\displaystyle \kappa_1(\underline{x(t)},\underline{y(t)})=-\frac{f_{xx}f_y^2-2f_{xy}f_xf_y+f_{yy}f_x^2}{f_y^3}(x(t),y(t))$,
\item[(3)] 
$\displaystyle \kappa_i(\underline{x(t)},\underline{y(t)})=\frac{\partial_x\kappa_{i-1}(f)\cdot f_y-\partial_y\kappa_{i-1}(f)\cdot f_x}{f_y}(x(t),y(t)),\ \text{ for }i\geq 2$.
\end{itemize}
Hence, we set
\begin{align*}
&\widetilde{\kappa}_0\coloneqq-\frac{f_x}{f_y},\\
&\widetilde{\kappa}_1\coloneqq-\frac{f_{xx}f_y^2-2f_{xy}f_xf_y+f_{yy}f_x^2}{f_y^3},\\
&\widetilde{\kappa}_i\coloneqq \frac{\partial_x\kappa_{i-1}(f)\cdot f_y-\partial_y\kappa_{i-1}(f) \cdot f_x}{f_y},\ \text{ for } i\geq 2,
\end{align*}
to be the implicit expressions of the algebraic curvatures. Together with the fact that the field of geometric invariants of plane curves is generated over $\mathbb{C}$ by $x^{(0)},y^{(0)}$, the slope and the (first and the higher) algebraic curvatures, we obtain the following theorem:

\begin{theorem}\label{impl}
Given $p=\frac{p_1}{p_2}\in I_F$ a geometric invariant of plane curves, there exist polynomials $\tilde{p}_1,\tilde{p}_2$ in $f$ and its partial derivatives, i.e., 
\[
\tilde{p}_1,\tilde{p}_2\in\mathbb{C}[x,y]
\] 
such that 
\[
p(\underline{x(t)},\underline{y(t)})=\frac{\tilde{p}_1(x(t),y(t))}{\tilde{p}_2(x(t),y(t))}
\]
for all parametrizations $(x(t),y(t))$ of $X$ satisfying $p_2(\underline{x(t)},\underline{y(t)})\neq 0$. In other words, each geometric invariant (of a given plane curve) admits an implicit description.\\
Moreover, if $p\in I_F\setminus\mathbb{C}(x^{(0)},y^{(0)})$, then we have even  
\[
\tilde{p}_1,\tilde{p}_2\in\mathbb{C}[\partial^i_x\partial_y^jf\,:\,i,j\in\mathbb{N}]\subseteq\mathbb{C}[x,y].
\]
\end{theorem}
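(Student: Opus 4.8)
The plan is to deduce this from Theorem \ref{12} together with the explicit formulas for $\widetilde{\kappa}_0,\widetilde{\kappa}_1,\widetilde{\kappa}_i$ recorded just above. First I would apply Theorem \ref{12}: since $I_F=\mathbb{C}(x^{(0)},y^{(0)},\kappa_i\,:\,i\in\mathbb{N})$ and $p$ involves only finitely many generators, there are an integer $N$ and a rational function $R$ in $N+3$ arguments with $p=R(x^{(0)},y^{(0)},\kappa_0,\dots,\kappa_N)$ in $F$. Evaluating at a parametrization $\gamma(t)=(x(t),y(t))$ of $X$ then gives $p(\underline{x(t)},\underline{y(t)})=R\bigl(x(t),y(t),\kappa_0(\underline{x(t)},\underline{y(t)}),\dots,\kappa_N(\underline{x(t)},\underline{y(t)})\bigr)$.

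Second, I would substitute the identities (1)--(3), which say $\kappa_i(\underline{x(t)},\underline{y(t)})=\widetilde{\kappa}_i(x(t),y(t))$ for the rational functions $\widetilde{\kappa}_i\in\mathbb{C}(x,y)$ in $f$ and its partials defined above; these follow from differentiating $f(x(t),y(t))=0$ and using $f_y(\gamma(t))\neq 0$, which holds because $f_y(0)\neq 0$ and $X$ is unibranch at $0$. This turns the previous line into $p(\underline{x(t)},\underline{y(t)})=\Phi(x(t),y(t))$ with $\Phi\coloneqq R(x,y,\widetilde{\kappa}_0,\dots,\widetilde{\kappa}_N)\in\mathbb{C}(x,y)$, and clearing denominators writes $\Phi=\tilde{p}_1/\tilde{p}_2$ with $\tilde{p}_1,\tilde{p}_2\in\mathbb{C}[x,y]$. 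The only bookkeeping needed here is to check that wherever $p_2(\underline{x(t)},\underline{y(t)})\neq 0$ the substitution is legitimate and $\tilde{p}_2$ does not vanish along $\gamma$; this is fine since on such parametrizations $f_y(\gamma(t))\neq 0$, so each $\widetilde{\kappa}_i(\gamma(t))$, and hence $\Phi(\gamma(t))$, is defined.

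For the ``moreover'' I would argue that the implicit expressions of the generators are built solely from the partials of $f$. Indeed $\widetilde{\kappa}_0=-f_x/f_y$, $\widetilde{\kappa}_1$ is a rational function in $f_x,f_y,f_{xx},f_{xy},f_{yy}$, and the recursion producing $\widetilde{\kappa}_i$ for $i\geq 2$ only applies $\partial_x,\partial_y$ and the field operations. Since the subfield $\mathbb{C}(\partial_x^a\partial_y^b f\,:\,a,b\in\mathbb{N})\subseteq\mathbb{C}(x,y)$ is closed under $\partial_x$ and $\partial_y$ (a partial of a generator is again a generator), an induction gives $\widetilde{\kappa}_i\in\mathbb{C}(\partial_x^a\partial_y^b f\,:\,a,b\in\mathbb{N})$ for all $i$. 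Hence, as soon as the representation of $p$ from Theorem \ref{12} does not involve $x^{(0)},y^{(0)}$, the rational function $\Phi$ already lies in $\mathbb{C}(\partial_x^a\partial_y^b f)$ and clearing denominators gives $\tilde{p}_1,\tilde{p}_2\in\mathbb{C}[\partial_x^a\partial_y^b f]$.

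The remaining, and I expect hardest, point is to treat an arbitrary $p\in I_F\setminus\mathbb{C}(x^{(0)},y^{(0)})$, i.e.\ to remove the explicit occurrences of $x$ and $y$ from $\Phi$. Here one must use the relation $f(x,y)=0$ on $X$ to replace, modulo $(f)$, the coordinates $x,y$ by rational expressions in the partials of $f$ --- equivalently, to show that along the branch of $X$ at $0$ the functions $x$ and $y$ themselves lie in $\mathbb{C}(\partial_x^a\partial_y^b f)$. This is the step where the hypothesis that $p$ is not a function of $x^{(0)},y^{(0)}$ alone (so that $\Phi$ carries genuine curvature information about $X$, encoded through the $\widetilde{\kappa}_i$) must be exploited, and carrying out this elimination cleanly is the crux of the argument.
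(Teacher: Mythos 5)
Your first two paragraphs reproduce the paper's argument exactly: the paper derives the implicit expressions $\widetilde{\kappa}_0,\widetilde{\kappa}_1,\widetilde{\kappa}_i$ by differentiating $f(\gamma(t))=0$ (using $f_y(\gamma(t))\neq 0$), and then obtains Theorem \ref{impl} immediately by combining these formulas with the generation statement of Theorem \ref{12}; there is no separate proof in the paper beyond this substitution. So for the main claim your proposal is correct and takes the same route.

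Concerning the ``moreover'' clause: the elimination of the explicit occurrences of $x^{(0)},y^{(0)}$ that you single out as the crux is nowhere carried out in the paper, and in fact it cannot be carried out in general. Take $f=y-x$ and $p=\kappa_0+x^{(0)}\in I_F\setminus\mathbb{C}(x^{(0)},y^{(0)})$, with $\gamma(t)=(t,t)$: every $\partial_x^i\partial_y^jf$ restricts along $\gamma$ to a constant ($0$, $-1$, $1$, or $0$), so any quotient $\tilde{p}_1/\tilde{p}_2$ with $\tilde{p}_1,\tilde{p}_2\in\mathbb{C}[\partial_x^i\partial_y^jf]$ is constant along the curve, whereas $p(\underline{\gamma(t)})=1+t$ is not, even though $p_2(\underline{\gamma(t)})=x'(t)\neq 0$. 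Hence the hypothesis $p\in I_F\setminus\mathbb{C}(x^{(0)},y^{(0)})$ has to be read as the paper implicitly uses it, namely that the representation produced by Theorem \ref{12} (via $i_\kappa$) involves only the curvatures $\kappa_i$ and not $x^{(0)},y^{(0)}$; under that reading your third paragraph, using that $\mathbb{C}(\partial_x^a\partial_y^bf\,:\,a,b\in\mathbb{N})$ is closed under $\partial_x,\partial_y$, already completes the argument. In short, your proposal proves everything the paper's proof proves; the step you flag as missing is not a gap you should try to fill but an imprecision in the statement itself.
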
 

Finally, we prove that we are able to reconstruct the analytic branch of $X$ at the origin from the values of the implicit expressions of the slope and of algebraic curvatures of plane curves $\widetilde{\kappa}_i(x,y)$ at $(0,0)$.
 
\begin{corollary}\label{reconstr}
Let us assume that $\widetilde{\kappa}_i(0)<\infty$ for all $i\in\mathbb{N}$. Then the equation
\[
y-\sum_{i\geq 0}\frac{\widetilde{\kappa}_j(0)}{(i+1)!}\cdot x^{i+1}=0
\]
defines the analytic branch of $X$ at the origin.
\end{corollary}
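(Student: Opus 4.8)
The plan is to exploit the smoothness of $X$ at $0$ by choosing the parametrization in which $x(t)=t$, and then to recognize the asserted series as the Taylor expansion of the $y$-coordinate of that parametrization.

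First I would record that, since $f_y(0)\neq 0$, the (formal, hence also analytic, since $f$ is a polynomial) implicit function theorem produces a unique $\psi\in\mathbb{C}\ps{x}$ with $\psi(0)=0$ and $f(x,\psi(x))=0$, and that the analytic branch of $X$ at the origin is exactly the zero locus of $y-\psi(x)$; equivalently, by Weierstrass preparation, $(f)=(y-\psi(x))$ in $\mathbb{C}\ps{x,y}$. In particular $\gamma(t)\coloneqq(t,\psi(t))$ is a parametrization of $X$ at the origin in the sense used before Theorem \ref{impl}: the induced map $\mathbb{C}\ps{x,y}/(f)\to\mathbb{C}\ps{t}$, $x\mapsto t$, $y\mapsto\psi(t)$, is the isomorphism $\mathbb{C}\ps{x,y}/(y-\psi(x))\xrightarrow{\ \sim\ }\mathbb{C}\ps{t}$, hence injective, and $\gamma(0)=0$. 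Since the relations $\kappa_i(\underline{x(t)},\underline{y(t)})=\widetilde{\kappa}_i(x(t),y(t))$ established before Theorem \ref{impl} hold for every parametrization of $X$ at the origin, I may compute the left-hand side using this particular $\gamma$.

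Next I would specialize the defining recursion of the algebraic curvatures to $x(t)=t$. Then $x'(t)=1$ and $\partial_t^i x(t)=0$ for $i\geq 2$, so $\kappa_0(\underline{x(t)},\underline{y(t)})=\psi'(t)$, $\kappa_1(\underline{x(t)},\underline{y(t)})=\psi''(t)$, and, using $\kappa_{i+1}=\partial(\kappa_i)/x^{(1)}$ together with $x^{(1)}=1$, an immediate induction gives $\kappa_i(\underline{x(t)},\underline{y(t)})=\psi^{(i+1)}(t)$ for all $i\geq 0$ (this is just the substitution $i_\kappa$ made explicit). Evaluating at $t=0$ and using $\gamma(0)=0$ yields $\widetilde{\kappa}_i(0)=\widetilde{\kappa}_i(\gamma(0))=\psi^{(i+1)}(0)$ for every $i$; these values are indeed finite, since the denominators of the $\widetilde{\kappa}_i$ are powers of $f_y$ and $f_y(0)\neq 0$, so in this setting the hypothesis $\widetilde{\kappa}_i(0)<\infty$ is automatic.

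To conclude, I would expand $\psi$ in its Taylor series at $0$; since $\psi(0)=0$,
\[
\psi(x)=\sum_{k\geq 1}\frac{\psi^{(k)}(0)}{k!}\,x^{k}=\sum_{i\geq 0}\frac{\psi^{(i+1)}(0)}{(i+1)!}\,x^{i+1}=\sum_{i\geq 0}\frac{\widetilde{\kappa}_i(0)}{(i+1)!}\,x^{i+1},
\]
so the equation in the statement is precisely $y-\psi(x)=0$ and hence defines the analytic branch of $X$ at the origin. The argument is essentially routine once $x(t)=t$ is chosen; the only steps that genuinely need care are the existence and admissibility of that parametrization (smoothness at $0$, the implicit/Weierstrass preparation theorem, unibranchness) and the parametrization-independence of the implicit expressions $\widetilde{\kappa}_i$, both of which are already available from the preceding material.
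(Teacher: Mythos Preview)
Your proof is correct and follows essentially the same approach as the paper: use $f_y(0)\neq 0$ and the Implicit Function Theorem to obtain the parametrization $(t,\psi(t))$, identify the analytic branch with $y-\psi(x)=0$, observe that for $x(t)=t$ the curvatures reduce to $\kappa_i=\psi^{(i+1)}(t)$, and conclude via the Taylor expansion of $\psi$. Your write-up is in fact somewhat more explicit than the paper's (you spell out the inductive computation of $\kappa_i$ and the injectivity of $\gamma^*$), but there is no genuine difference in strategy.
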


\begin{proof}
Notice first that by the assumption $\widetilde{\kappa}_i(0)<\infty$, the case $f_y(0)\neq 0$ is excluded. Hence, the Implicit Function Theorem applies to $f$ and guarantees the existence of a parametrization of the form $(t,y(t))\in\mathbb{C}\ps{t}^2$ of $X$ at $0$, where $y(t)$ is even a convergent power series (since it is algebraic). The analytic branch of $X$ at $0$ is thus defined by the equation $g=y-y(x)$. The power series $y(t)$ can be expressed by the Taylor expansions as 
\[
y(t)=\sum_{i\geq 1}\frac{\partial_t^iy(0)}{i!}\cdot t^i=\sum_{i\geq 0}\frac{\kappa_{i}(\underline{t},\underline{y(t)})|_{t=0}}{(i+1)!}\cdot t^{i+1}.
\]
After rewriting each $\kappa_i(\underline{t},\underline{y}(t))$ as $\widetilde{\kappa}_i(t,y(t))$ and substituting $t=0$, we obtain the claimed equality.
\end{proof}

\begin{remark}\normalfont
Notice that for each plane algebraic curve $X\subseteq\mathbb{A}^2_{\mathbb{C}}$ defined by a square-free polynomial $f$ that is smooth at the origin, either $f_x(0)\neq 0$ or $f_y(0)\neq 0$ holds. Therefore, in the case of $f_y(0)=0$, we can just use the coordinate change $x\mapsto y, y\mapsto x$, in order to reach the assumptions of Corollary \ref{reconstr}. In such case, the corollary gives us for the analytic branch of $X$ at the origin an analytic equation of the form $x-x(y)=0$.
\end{remark}

\section{Geometric Invariants of Space Curves}

As already mentioned, the concept of geometric invariants and the ideas and techniques used in the case of plane curves can be easily extended to space curves in $\mathbb{A}_\mathbb{C}^{n+1}$ as well. There are only few technicalities we have to deal with and which have to be carried out explicitly. Hence, as the proofs of the most results about geometric invariants of space curves follow the same punch line as in the plane curve case, we will not repeat them completely. Instead of that we will often refer to the corresponding statements and proofs from the previous section and will rather concentrate on fixing new difficulties which appear when considering higher embedding dimensions $n+1$ with $n\geq 2$.

For each $n\in\mathbb{N}, n\geq 1$ let us consider the set of variables $x^{(i)},y_j^{(i)}$ for $i,j\in\mathbb{N}, 1\leq j\leq n$ and the field 
\[
F_n\coloneqq\mathbb{C}(x^{(i)},y_j^{(i)}) \,:\,i,j\in\mathbb{N}, 1\leq j\leq n).
\]
The integer $n+1$ stands for the embedding dimension of the space curves. We extend the derivation $\partial$ to $F_n$ by $\partial(y_j^{(i)})=y_j^{(i+1)}$ and thus, obtain the differential field $(F_n,\partial)$. Let us define
\[
L_n\coloneqq F_n(\varphi^{(i)},i\in\mathbb{N})=\mathbb{C}(x^{(i)},y_j^{(i)},\varphi^{(i)}  \,:\, i,j\in\mathbb{N}, 1\leq j\leq n).
\] 
Further, we extend the symbolic chain rule $\chi$ and the field homomorphism $\Lambda$ to $L_n$ by
\[
\chi(y_j^{(i)})=y_j^{(i+1)}\varphi^{(1)} \ \text{ and }\ \Lambda(y_j^{(i)})=\chi^{i}(y_j^{(0)}),
\]
respectively and define geometric invariants of space curves as those rational expressions that are invariant under $\Lambda$.

\begin{definition}\normalfont
We call a rational expression $p(\underline{x},\underline{y_j}  \,:\, 1\leq j\leq n)$ in $x^{(0)},y_j^{(0)}$ and their higher symbolic derivatives a \emph{geometric invariant of algebraic space curves (of embedding dimension $n+1$)} if it stays fixed under $\Lambda$, i.e., if the following equality is fulfilled
\[
p=\Lambda(p).
\]
\end{definition}

By $I_{F_n}$ we denote the corresponding invariant field, the field of all geometric invariants of space curves of embedding dimension $n+1$. 

We follow now the following strategy to construct geometric invariants of space curves: We use geometric invariants of plane curves, which we have already studied and whose basic properties are already known to us. Notice that each geometric invariant of plane curves, except for polynomials in $x^{(0)}$ with coefficients in $\mathbb{C}$, gives rise to $n$ different geometric invariants of space curves of embedding dimension $n+1$: 
Each geometric invariant $p$ of plane curves is a rational function in variables $x^{(i)},y^{(i)},i\in\mathbb{N}$. Let us emphasize the set of variables by writing $p(\underline{x},\underline{y})$ instead of just $p$. Hence, replacing each variable $y^{(i)}$ in $p(\underline{x},\underline{y})$ by $y_j^{(i)}$, for some $j$, does not disturb the invariance and yields therefore a geometric invariant of algebraic space curves. Using this substitution we define
\[
\kappa_{i,j}:=\kappa_i(\underline{x},\underline{y_j}),
\]
and call these expressions again the \emph{slopes (of the tangent vector)} in the case $i=0$ and (the \emph{first} if $i=1$ and the \emph{higher} for $i\geq 2$) \emph{algebraic curvatures} (\emph{of space curves}) otherwise. In this way we obtain the following system of geometric invariants of space curves:
\[
x^{(0)},y_j^{(0)},\kappa_{i,j},\quad \text{ where } i,j\in\mathbb{N} \text{ and } 1\leq j \leq n.
\] 
It turns even out that they represent a complete system of generators of the field of geometric invariants of space curves of embedding dimension $n+1$ and that each geometric invariant of space curves can be uniquely written as a rational function in the algebraic curvatures when applying the following $\mathbb{C}$-morphism to it
\begin{align*}
&i_\kappa:F_n\rightarrow F_n\\
&x^{(0)}\mapsto x^{(0)},x^{(1)}\mapsto 1, x^{(i)}\mapsto 0\ \text{ for all }\ i\geq 2\\
&y_j^{(0)}\mapsto y_j^{(0)},y_j^{(i)}\mapsto \kappa_{i-1,j}\ \text{ for all }\ 1\leq i,1\leq j\leq n.
\end{align*}
More precisely:

\begin{theorem}\label{14}
The field of geometric invariants of space curves of embedding dimension $n+1$ is generated over $\mathbb{C}$ by the variables $x^{(0)},y_j^{(0)}$, the slopes and the first and higher algebraic curvatures $\kappa_{i,j},i,j\in\mathbb{N},1\leq j\leq n,$ i.e.,
\begin{align}\label{inv}
I_{F_n}=\mathbb{C}(x^{(0)},y_j^{(0)},\kappa_{i,j}  \,:\, i,j,\in\mathbb{N},1\leq j\leq n).
\end{align}
Moreover, for each geometric invariant $p(\underline{x},\underline{y_1}\dots,\underline{y_n})$ the following equality is fulfilled 
\begin{align}\label{invar}
p=i_{\kappa}(p).
\end{align}
\end{theorem}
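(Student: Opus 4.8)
The plan is to mimic the two-proof structure already established for plane curves (Theorem \ref{12}), but to leverage the plane-curve results as a black box rather than redo everything from scratch. The key observation is that the constructions defining $I_{F_n}$ are, in each $y_j$-coordinate separately, literally the same constructions that define $I_F$; the only new phenomenon in embedding dimension $n+1$ is the \emph{mixing} of the different coordinate blocks $\underline{y_j}$. So I would first reduce the multi-block statement to the single-block one, and then quote the plane-curve theorems.

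\textbf{Reduction step.} Set $J_n \coloneqq \mathbb{C}(x^{(0)}, y_j^{(0)}, \kappa_{i,j} : i,j\in\mathbb{N}, 1\le j\le n)$ and $\mathsf{X} \coloneqq \{x^{(i)} : i\ge 1\}$. As in Lemma \ref{15}, one checks that $J_n(\mathsf{X})$ is closed under $\partial$ (using $\partial(\kappa_{i,j}) = \kappa_{i+1,j}\,x^{(1)}$ for $i\ge1$, which holds because each $\kappa_{i,j}$ is obtained from $\kappa_i$ by the substitution $y^{(i)}\mapsto y_j^{(i)}$, a substitution that commutes with $\partial$) and contains all generators of $F_n$, hence $F_n = J_n(\mathsf{X})$. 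Next, the analogue of Proposition \ref{17}: $\mathsf{X}$ is algebraically independent over $I_{F_n}$. The proof of Proposition \ref{17} transfers verbatim once one notes that Lemma \ref{16} --- the statement $\Lambda(x^{(i)}) - \varphi^{(i)}x^{(1)} \in V_{i-1}$ with $V_i = F_n[\varphi^{(1)},\dots,\varphi^{(i)}]$ --- depends only on how $\Lambda$ and $\chi$ act on the $x^{(i)}$, which is unchanged. With these two facts, the coefficient-comparison argument in the First Proof of Theorem \ref{12} gives $I_{F_n} = J_n$, i.e.\ equation \eqref{inv}.

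\textbf{The substitution identity.} For \eqref{invar}, I would run the second (Demailly-style) proof. Pick power series $x(t), y_1(t),\dots,y_n(t) \in \mathbb{C}\ps{t}$ that are D-algebraically independent (existence of such families is recalled in Appendix \ref{appendix}) with $\ord(x(t)) = 1$, let $\varphi(t)$ be the compositional inverse of $x(t)$, and compute as in the plane case that $\partial_t(x\circ\varphi) = 1$, $\partial_t^i(x\circ\varphi) = 0$ for $i\ge 2$, and for each $j$ that $\partial_t^i(y_j\circ\varphi)(t) = \kappa_{i-1,j}(\underline{x(t)},\underline{y_1(t)},\dots,\underline{y_n(t)})\circ\varphi(t)$ for $i\ge 1$ (the induction is the same as before, carried out in the $j$-th block). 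Then the equivariance equation for $p$, composed with $\varphi^{-1}$ on both sides, collapses to $\bigl(p - i_\kappa(p)\bigr)(\underline{x(t)},\underline{y_1(t)},\dots,\underline{y_n(t)}) = 0$, and D-algebraic independence forces $p = i_\kappa(p)$. Note that \eqref{invar} immediately re-proves \eqref{inv}, since $i_\kappa(p) \in J_n$ by construction, so one could also present only this second proof.

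\textbf{Main obstacle.} The genuinely new point --- and the place where a reader might worry --- is the D-algebraic independence of the \emph{whole family} $x(t), y_1(t),\dots,y_n(t), \varphi(t)$ (for the first proof's analogue of Proposition \ref{param}, if used) and the validity of the iterated chain rule simultaneously across all $n$ coordinate blocks. Both are really bookkeeping: the chain rule applies blockwise and independently, and the existence of D-algebraically independent families of arbitrary finite (or countable) size is exactly what Appendix \ref{appendix} provides. Beyond this, the only care needed is to make sure that no relation sneaks in \emph{between} distinct blocks $\underline{y_j}$ and $\underline{y_k}$ --- which is precisely what algebraic independence of $\mathsf{X}$ over $I_{F_n}$ (our analogue of Proposition \ref{17}) and the D-algebraic independence of the chosen parametrization rule out. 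So I expect the write-up to consist mostly of sentences of the form ``the proof of [plane-curve statement] applies verbatim, replacing $\underline{y}$ by $\underline{y_j}$ and allowing the index $j$ to range over $1,\dots,n$,'' with the chain-rule induction in the second proof being the one computation worth spelling out.
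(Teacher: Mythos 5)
Your proposal follows essentially the same route as the paper: the paper proves \eqref{inv} by substituting $F_n$ for $F$ and $J_n$ for $J$ in Lemma \ref{15}, Proposition \ref{17} and the first proof of Theorem \ref{12}, and establishes \eqref{invar} by exactly the Demailly-style argument you describe (a D-algebraically independent family $x(t),y_1(t),\dots,y_n(t)$ with $\ord(x(t))=1$, composition with the inverse of $x(t)$, blockwise chain rule, then D-algebraic independence forcing $p=i_\kappa(p)$). Your remark that \eqref{invar} alone already yields \eqref{inv} mirrors the paper's observation after Proposition \ref{13} in the plane-curve case.
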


There are again two proofs of the first part of this theorem. If we just replace the field $F$ by $F_n$ and $J=~\mathbb{C}(x^{(0)},y^{(0)},\kappa_i  \,:\, i\in\mathbb{N})$ by the field $J_n\coloneqq\mathbb{C}(x^{(0)},y_j^{(0)},\kappa_{i,j}:i,j\in\mathbb{N},1\leq j\leq n)$ in Lemma \ref{15}, Proposition \ref{17} and the first proof of Theorem \ref{12}, we obtain already one proof of the equality (\ref{inv}). However, this proof does not explain the second part of Theorem \ref{14}, namely the equality (\ref{invar}). For this we need again the trick from the second proof of Theorem \ref{12} presented in the previous section.

\begin{proof}
The proof follows the same line as the second proof of Theorem \ref{12}. We consider again $x(t),y_1(t),\dots,y_n(t)\in \mathbb{C}\ps{t}$ a family of D-algebraically independent power series with the property that $\ord(x(t))=1.$  Using equation (\ref{1}), which obviously holds also for $(n+1)$-tuples of power series, the equality 
\[
p(\underline{x(t)},\underline{y_1(t)},\dots,\underline{y_n}(t))=i_{\kappa}(p)(\underline{x(t)},\underline{y_1(t)},\dots,\underline{y_n}(t))
\]
can be shown with the same trick as in the second proof of Theorem \ref{12}. Finally, we use the D-algebraic independence of the power series $x(t),y_1(t),\dots,y_n(t)$ and conclude the required equality in the field of geometric invariants of space curves.
\end{proof}

\vspace{3mm}

\centerline{\textbf{Implicit Expressions of Geometric Invariants of Space Curves}}

\vspace{3mm}

Similarly to the plane curve case, geometric invariants of space curves do admit implicit expressions in terms of the defining implicit equations of space curves and their higher derivatives as well. To find these implicit expressions we differentiate again the composition of the implicit equations with a parametrization and use the chain rule.

More precisely, consider an algebraic space curve $X=V(I)\subseteq\mathbb{A}_\mathbb{C}^{n+1}$ defined by a radical ideal $I$. Further assume $0$ being a smooth point on $X$. Let $\gamma(t)$ be a parametrization of $X$ at $0$. Here, by a parametrization we again mean an $(n+1)$-tuple of univariate power series $\gamma(t)=~(x(t),y_1(t),\dots,y_n(t))\in\mathbb{C}\ps{t}^{n+1}$ for which the corresponding ring map
\begin{align*}
\gamma^*\colon \mathbb{C}\ps{x,y_1,\dots,y_n}/I&\rightarrow \mathbb{C}\ps{t}\\
x&\mapsto x(t)\\
y_j&\mapsto y_j(t), \ \text{ for } 1\leq j\leq n
\end{align*}
is injective. Let us write $I=(f_1,\dots,f_r)$, with $ f_j\in~\mathbb{C}[x,y_1,\dots,y_{n},]$ the generators of $I$ for some $r\geq n$. If we differentiate the equalities $f_j(\gamma(t))=0$ for all $1\leq j\leq r$ with respect to $t$, we get the following system of equations:
\begin{align}\label{2} J_{f_1,\dots,f_r}(\gamma(t))\cdot\gamma'(t)=0,
\end{align}
where $J_{f_1,\dots,f_r}$ denotes the Jacobian matrix of $f_1,\dots,f_r$. The strategy how to find implicit expressions for the slopes $\kappa_{0,j}$, is to eliminate all components of the vector $\gamma'(t)$ except for $x'(t)$ and $y_j'(t)$ in equation (\ref{2}), or, in other words, to express each $y'_j(t)$ as a linear function in the parameter $x'(t)$. For this purpose we need only $n$ rows of the Jacobian matrix $J_{f_1,\dots,f_r}$ that are linearly independent, let us say the first $n$ rows.

\begin{remark}\normalfont
Recall that for any point $a\in X$ we have $\rk(J_{f_1,\dots,f_r}(a))\leq \codim(X)=n$ and the equality holds if and only if $a$ is a smooth point of $X$. Hence, as $0$ is a smooth point of $X$, if it happens that the first $n$ rows of the Jacobian matrix $J_{f_1,\dots,f_r}$ are linearly dependent at $0$, we can always reorder the generators of the ideal $I$, let us say $f_{\sigma(1)},\dots,f_{\sigma(r)}$ for some permutation $\sigma\in S_r$, so that $J_{f_{\sigma(1)},\dots,f_{\sigma(n)}}$ is invertible at $0$. Thus, we can w.l.o.g.~always assume that $\det(J_{f_1,\dots,f_n}(0))\neq 0$ and so $\det\bigl(J_{f_1,\dots,f_n}(\gamma(t))\bigr)\neq 0.$ 
\end{remark}

Let us rewrite the first $n$ rows of the equation (\ref{2}) into
\begin{align*}
{\cal J}(\gamma(t))\cdot\left(\frac{y'_j(t)}{x'(t)}\right)_{j=1}^n=
-\bigl(\partial_{x}f_j(\gamma(t))\bigr)_{j=1}^n,
\end{align*}
with
\[
{\cal J}\coloneqq\begin{pmatrix} \partial_{y_{1}}f_1 & \cdots & \partial_{y_{n}}f_1\\ 
\vdots & \ddots & \vdots\\
\partial_{y_{1}}f_n & \cdots & \partial_{y_{n}}f_n  \end{pmatrix}.
\]
Applying the Cramer's rule to the above system of linear equations yields for each $j=1,\dots,n$ the equality
\[
\kappa_{0,j}(\underline{x(t)},\underline{y_j(t)})=\frac{y_j'(t)}{x'(t)}=\frac{\det{\cal J}_j(\gamma(t))}{\det{\cal J}(\gamma(t))},
\]
where ${\cal J}_j$ is the matrix formed by replacing the $j$-th column of ${\cal J}$ by the column vector $-(\partial_xf_j)_{j=1}^n$, i.e.,

\[
{\cal J}_j\coloneqq\begin{pmatrix} \partial_{y_{1}}f_1 &  \cdots & \partial_{y_{j-1}}f_1 & -\partial_{x}f_1&  \partial_{y_{j+1}}f_1& \cdots & \partial_{y_{n}}f_1\\ 
\vdots & \ddots & \vdots & \vdots &\vdots & \ddots & \vdots\\
\partial_{y_{1}}f_n&  \cdots & \partial_{y_{j-1}}f_n & -\partial_{x}f_n&  \partial_{y_{j+1}}f_n& \cdots & \partial_{y_{n}}f_n\end{pmatrix}.
\]
We set
\[
\widetilde{\kappa}_{0,j}\coloneqq\frac{\det{\cal J}_j}{\det{\cal J}}
\]
to be an implicit expression for the slope $\kappa_{0,j}$.

Let us assume that we have already computed $\widetilde{\kappa}_{i,j}$, an implicit expression for~$\kappa_{i,j}$. Then, for the higher algebraic curvatures $\kappa_{i,j},i\geq 1$ we have by definition
\begin{align*}
\kappa_{i+1,j}(\underline{x(t)},\underline{y_j(t)}) =\ &\frac{1}{x'(t)}\cdot\partial_t\bigl(\kappa_{i,j}(\underline{x(t)},\underline{y_j(t)})\bigr) =\ \frac{1}{x'(t)}\cdot\partial_t\bigl(\widetilde{\kappa}_{i,j}(\gamma(t))\bigr)\\
=\ &\frac{1}{x'(t)}\left(\partial_{x}\widetilde{\kappa}_{i,j}(\gamma(t))\cdot x'(t)+\sum_{k=1}^n\partial_{y_k}\widetilde{\kappa}_{i,j}(\gamma(t))\cdot y'_k(t) \right)\\
=\ &\partial_{x}\widetilde{\kappa}_{i,j}(\gamma(t))+\sum_{k=1}^n\partial_{y_k}\widetilde{\kappa}_{i,j}(\gamma(t))\cdot\widetilde{\kappa}_{0,k}(\gamma(t)).
\end{align*}
We set
\[
\widetilde{\kappa}_{i+1,j}\coloneqq \partial_{x}\widetilde{\kappa}_{i,j}+\sum_{k=1}^n\partial_{y_k}\widetilde{\kappa}_{i,j}\cdot\widetilde{\kappa}_{0,k}
\]
and call it an implicit expression for the algebraic curvature $\kappa_{i+1,j}$. 

Now, since the algebraic curvatures generate the whole field of geometric invariants of space curves, we conclude the existence of an implicit expression for an arbitrary geometric invariant of space curves. 

Let us further for $p$ a geometric invariant denote the evaluation $p(\underline{x(t)},\underline{y_1(t)},\dots,\underline{y_n(t)})$ by $p(\underline{\gamma(t)})$.

\begin{theorem}\label{impl:sp}
For each geometric invariant $p=\frac{p_1}{p_2}$ of space curves of embedding dimension $n+1$ there exist polynomials 
\[
\tilde{p}_1,\tilde{p}_2\in\mathbb{C}[x,y_1,\dots,y_n],
\]
such that the equality
\[
p(\underline{\gamma(t)})=\frac{\tilde{p}_1(\gamma(t))}{\tilde{p}_2(\gamma(t))}
\]
is satisfied for all parametrizations $\gamma(t)$ of $X$ for which $p_2(\underline{\gamma(t)})\neq 0$. In other words, each geometric invariant (of a given space curve) admits an implicit description (in terms of its defining equations).\\
Moreover, if $p\in I_{F_n}\setminus\mathbb{C}(x^{(0)},y_j^{(0)}\,:\,1\leq j\leq n)$, then the polynomials $\tilde{p}_1$ and $\tilde{p}_2$ are even polynomials in the generators $f_1,\dots,f_r$ and their partial derivatives, i.e.,
\[
\tilde{p}_1,\tilde{p}_2\in\mathbb{C}[\partial_{x}^{i_0}\partial_{y_1}^{i_1}\cdots\partial_{y_n}^{i_n}f_k\,:\,i_0,\dots,i_n\in\mathbb{N},1\leq k\leq r]\subseteq\mathbb{C}[x,y_1,\dots,y_n].
\]
\end{theorem}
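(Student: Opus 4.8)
The plan is to combine Theorem~\ref{14} with the recursively constructed implicit expressions $\widetilde{\kappa}_{0,j}=\frac{\det\mathcal{J}_j}{\det\mathcal{J}}$ and $\widetilde{\kappa}_{i+1,j}=\partial_x\widetilde{\kappa}_{i,j}+\sum_{k=1}^n\partial_{y_k}\widetilde{\kappa}_{i,j}\cdot\widetilde{\kappa}_{0,k}$ obtained just above. By Theorem~\ref{14} every $p=\frac{p_1}{p_2}\in I_{F_n}$ satisfies $p=i_\kappa(p)$, and by its very shape $i_\kappa(p)$ is a rational function over $\mathbb{C}$ in finitely many of the generators $x^{(0)},y_j^{(0)},\kappa_{i,j}$; fix such a rational function $R$ with $p=R\bigl(x^{(0)},y_1^{(0)},\dots,y_n^{(0)},(\kappa_{i,j})_{i,j}\bigr)$. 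The proof then amounts to substituting into $R$ the implicit counterpart of each argument and clearing denominators.

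First I would check, by induction on $i$, that every $\widetilde{\kappa}_{i,j}$ lies in the subfield $K\coloneqq\mathbb{C}\bigl(\partial_x^{i_0}\partial_{y_1}^{i_1}\cdots\partial_{y_n}^{i_n}f_k\,:\,i_0,\dots,i_n\in\mathbb{N},\,1\le k\le r\bigr)$ of $\mathbb{C}(x,y_1,\dots,y_n)$ generated by the higher partial derivatives of $f_1,\dots,f_r$. For $i=0$ this is immediate because the entries of $\mathcal{J}$ and of the $\mathcal{J}_j$ are the first order partials $\partial_{y_l}f_k$ and $\partial_xf_k$. For the step one uses the recursion above together with the fact that $\partial_x$ and each $\partial_{y_k}$ map $K$ into $K$, since they send a higher partial of $f_l$ to another higher partial of $f_l$. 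This step introduces nothing beyond the recursion already set up.

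Next, for any parametrization $\gamma(t)=(x(t),y_1(t),\dots,y_n(t))$ of $X$, the defining property of the $\widetilde{\kappa}_{i,j}$ gives $\kappa_{i,j}(\underline{x(t)},\underline{y_j(t)})=\widetilde{\kappa}_{i,j}(\gamma(t))$ in $\mathbb{C}\ps{t}$. Substituting $x^{(0)}\mapsto x(t)$, $y_j^{(0)}\mapsto y_j(t)$ and $\kappa_{i,j}\mapsto\widetilde{\kappa}_{i,j}(\gamma(t))$ into $p=R(\dots)$ and bringing the result over a common denominator produces polynomials $\tilde p_1,\tilde p_2\in\mathbb{C}[x,y_1,\dots,y_n]$ with $p(\underline{\gamma(t)})=\frac{\tilde p_1(\gamma(t))}{\tilde p_2(\gamma(t))}$, valid wherever $p_2(\underline{\gamma(t)})\ne 0$; this is the first assertion. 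For the refined statement one notes that all the $\widetilde{\kappa}_{i,j}$ already lie in $K$, so the only arguments of $R$ that may lie outside the corresponding polynomial ring are $x^{(0)}$ and the $y_j^{(0)}$; when $p\notin\mathbb{C}(x^{(0)},y_j^{(0)})$ one absorbs their contributions and clears denominators so that $\tilde p_1,\tilde p_2$ end up in the polynomial ring $\mathbb{C}[\partial_x^{i_0}\partial_{y_1}^{i_1}\cdots\partial_{y_n}^{i_n}f_k\,:\,i_0,\dots,i_n\in\mathbb{N},\,1\le k\le r]$. Apart from this, the argument is a transcription of the proof behind Theorem~\ref{impl} under the dictionary $F\rightsquigarrow F_n$, $\kappa_i\rightsquigarrow\kappa_{i,j}$, the Cramer's-rule formula for the slopes replacing the single identity $\widetilde{\kappa}_0=-\frac{f_x}{f_y}$.

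I expect the delicate point to be the bookkeeping around denominators: one must verify that the common denominator $\tilde p_2$ produced by the substitution vanishes along $\gamma(t)$ only where $p_2(\underline{\gamma(t)})$ does, so that no spurious poles arise and the asserted equality holds precisely on the locus $\{p_2(\underline{\gamma(t)})\ne 0\}$; and, for the refined claim, that the contributions of $x^{(0)}$ and the $y_j^{(0)}$ can indeed be re-expressed through the partial derivatives of $f_1,\dots,f_r$ along $X$, which is where the genuine input about the embedding of $X$ enters.
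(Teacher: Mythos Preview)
Your proposal is correct and follows essentially the same route as the paper: the paper does not write out a separate proof of Theorem~\ref{impl:sp} but simply concludes it from the preceding construction of the $\widetilde{\kappa}_{i,j}$ via Cramer's rule and the recursion, together with Theorem~\ref{14}, which is precisely your argument. Your write-up is in fact more detailed than the paper's, and the two concerns you flag at the end (matching of the pole loci of $p_2$ and $\tilde p_2$, and the handling of $x^{(0)},y_j^{(0)}$ in the ``Moreover'' clause) are points the paper leaves implicit as well.
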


As in the case of plane curves, the values of the slope and (higher) algebraic curvatures of a given space curve at a smooth point describe the curve completely. More precisely:

\begin{corollary}\label{sp:cur}
Suppose that $\widetilde{\kappa}_{i,j}(0)<\infty$ for all $i\in\mathbb{N},1\leq j\leq n$. Then the analytic branch of $X$ at $0$ is defined by the ideal
\[
\mathcal{I}_X=\left(y_j-\sum_{i\geq 0}\frac{\widetilde{\kappa}_{i,j}(0)}{(i+1)!}x^{i+1}  \,:\, 1\leq j\leq n\right).
\]
\end{corollary}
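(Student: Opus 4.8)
The plan is to mirror the proof of Corollary~\ref{reconstr}, carrying out each of its steps coordinatewise in $y_1,\dots,y_n$. First I would observe that the assumption $\widetilde{\kappa}_{i,j}(0)<\infty$ for all $i$ and $j$ forces $\det{\cal J}(0)\neq 0$, i.e.\ the submatrix $\bigl(\partial_{y_k}f_l(0)\bigr)_{1\leq k,l\leq n}$ is invertible. Indeed $\widetilde{\kappa}_{0,j}=\det{\cal J}_j/\det{\cal J}$, so finiteness of the slopes already rules out $\det{\cal J}(0)=0$; this is the space-curve analogue of "the case $f_y(0)\neq 0$ is excluded". With this in hand, the Implicit Function Theorem (applied to the map $(x,y_1,\dots,y_n)\mapsto(f_1,\dots,f_n)$ near $0$, solving for $y_1,\dots,y_n$ in terms of $x$) guarantees the existence of a parametrization of $X$ at $0$ of the special form $\gamma(t)=(t,y_1(t),\dots,y_n(t))\in\mathbb{C}\ps{t}^{n+1}$, with each $y_j(t)$ a convergent power series (it is algebraic over $\mathbb{C}\{t\}$, hence analytic).

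Next I would expand each $y_j(t)$ as a Taylor series and rewrite the derivatives via the algebraic curvatures. Since $x(t)=t$ we have $x'(t)=1$ and $\partial_t^ix(t)=0$ for $i\geq 2$, so by the very definition of the $\kappa_{i,j}$ (applied to this parametrization) one gets $\partial_t y_j(t)=\kappa_{0,j}(\underline{x(t)},\underline{y_j(t)})$ and, inductively using $\kappa_{i,j}=\partial_t\kappa_{i-1,j}/x'$ together with $x'=1$, the identity $\partial_t^{i+1}y_j(t)=\kappa_{i,j}(\underline{x(t)},\underline{y_j(t)})$ for all $i\geq 0$. Hence
\[
y_j(t)=\sum_{i\geq 1}\frac{\partial_t^i y_j(0)}{i!}\,t^i=\sum_{i\geq 0}\frac{\kappa_{i,j}(\underline{x(t)},\underline{y_j(t)})\big|_{t=0}}{(i+1)!}\,t^{i+1}.
\]
Rewriting each $\kappa_{i,j}(\underline{x(t)},\underline{y_j(t)})$ implicitly as $\widetilde{\kappa}_{i,j}(\gamma(t))$ (Theorem~\ref{impl:sp}, or more directly the explicit formulas for $\widetilde{\kappa}_{0,j}$ and the recursion for $\widetilde{\kappa}_{i+1,j}$) and evaluating at $t=0$ shows that $y_j(x)=\sum_{i\geq 0}\frac{\widetilde{\kappa}_{i,j}(0)}{(i+1)!}x^{i+1}$. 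Therefore the analytic branch of $X$ at $0$, which by the Implicit Function Theorem is cut out by $\bigl(y_j-y_j(x):1\leq j\leq n\bigr)$, coincides with $\mathcal{I}_X$ as stated.

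The one genuinely new point compared with the plane-curve case — and the step I expect to require the most care — is justifying that the $n$ equations $y_j-y_j(x)=0$ really generate the full ideal of the analytic branch, i.e.\ that the branch of $X$ at $0$ is a smooth analytic curve whose ideal in $\mathbb{C}\ps{x,y_1,\dots,y_n}$ is exactly this complete intersection. This is precisely what the Implicit Function Theorem delivers once $\det{\cal J}(0)\neq 0$: locally analytically, $X$ is the graph $\{y=y(x)\}$, and the ideal of a graph is generated by the $y_j-y_j(x)$; the remaining generators $f_{n+1},\dots,f_r$ of $I$ vanish on this graph automatically, so they add nothing. I would also remark, as in the plane case, that when $\det{\cal J}(0)=0$ one may permute the variables $x,y_1,\dots,y_n$ so that some $n\times n$ minor of the Jacobian is invertible at $0$, reducing to the situation above after a linear change of coordinates; this handles the general smooth point and matches the normalization $\det(J_{f_1,\dots,f_n}(0))\neq 0$ already adopted before Theorem~\ref{impl:sp}.
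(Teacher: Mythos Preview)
Your proof is correct and follows essentially the same route as the paper's: deduce $\det{\cal J}(0)\neq 0$ from the finiteness hypothesis, obtain a parametrization of the form $(t,y_1(t),\dots,y_n(t))$, Taylor-expand each $y_j$, and rewrite the coefficients via the implicit curvatures. The only cosmetic difference is that the paper justifies that $(y_j-y_j(x):1\le j\le n)$ cuts out the branch by observing this ideal is prime of height $n$, whereas you invoke the Implicit Function Theorem and the graph description directly; both arguments are valid and equivalent here.
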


\begin{proof}
Since $0$ is a smooth point on $X$, the curve $X$ is locally at $0$ biholomorphic to an open subset of $\mathbb{C}$ containing $0$. Thus, $X$ can be parametrized at $0$ by a parametrization $\gamma(t)=(x(t),y_1(t),\dots,y_n(t))$ with at least one component of order equal to one. Further, the first $n$ rows of equality (\ref{2}) can be written as
\begin{align}\label{eval0}
y'_j(t)\cdot\det{\cal J}(\gamma(t))=x'(t)\cdot\det{\cal J}_j(\gamma(t)).
\end{align}
From the assumption $\widetilde{\kappa}_{i,j}(0)<\infty$ it follows that $\det{\cal J}(0)\neq 0$ and that $x'(0)\neq 0.$ Therefore, $\gamma(t)$ has the form
\[
\gamma(t)=(t,y_1(t),\dots,y_n(t))\in\mathbb{C}\{t\}^{n+1}.
\]
Once we have determined the components $y_j(t),j=1,\dots,n$, of the parametrization, we conclude immediately that the analytic branch of $X$ at $0$ is contained in the analytic variety defined by the ideal $(y_j-y_j(x):1\leq j\leq n)$. But this ideal is a prime ideal and its height equals~$n$. Hence, it defines already the analytic branch of $X$ at $0$. Further, the Taylor expansion yields
\[
y_j(t)=\sum_{i\geq 0}\frac{\partial_t^iy_j(0)}{i!}\cdot t^{i}=\sum_{i\geq 0}\frac{\kappa_{i,j}(\underline{t},\underline{y_j(t)})|_{t=0}}{(i+1)!}\cdot t^{i+1}
\]
for all $1\leq j\leq n$. Rewriting each $\kappa_{i,j}(\underline{t},\underline{y_j(t)}$ as $\widetilde{\kappa}_{i,j}(\gamma(t))$ and substituting $t=0$ finishes now the proof.
\end{proof}

\appendix
\section{Differentially Algebraic and Differentially Transcendental Power Series and their Families}\label{appendix}

We list some important and for the classification of geometric invariants essential facts about differentially algebraic and differentially transcendental power series in this section. The theorems and their proofs that are presented here, are classical results and techniques by J.~F.~Ritt and E.~Gourin \cite{gr27}, which can be found also in Rubel's survey~\cite{ru89}.

Recall that a univariate power series $f\in\mathbb{C}\ps{x}$ is called \emph{algebraic} if it is a solution of $p(x,f(x))=0$ for some bivariate nonzero polynomial $p\in\mathbb{C}[x,y]$, and that $f$ is called \emph{transcendental} otherwise. But it may happen that, even though $f$ is transcendental, it satisfies an algebraic differential equation, strictly speaking a partial differential equation, i.e., 
\begin{equation}\label{02}
q(x,\partial^{i}_{x}f  \,:\, 0\leq i\leq k-1)=0
\end{equation}
is fulfilled by $f$ for some nonzero polynomial $q\in\mathbb{C}[x,y_1,\dots,y_{k}]$ in $k+1$ variables, with $k\in\mathbb{N}$ some positive integer. In this case we say that $f$ is \emph{differentially algebraic} or \emph{D-algebraic} and otherwise we call $f$ \emph{transcendentally transcendental} or \emph{hypertranscendental} or \emph{D-transcendental}. The definition is due to E.~Kolchin \cite[Chapter I, \S 6]{ko73} and this concept appears for example also in works by L.~A.~Rubel \cite{ru92}, C.~Hardouin and M.~F.~Singer \cite{hs08}, T.~Dreyfus and  C.~Hardouin \cite{dh19} and A.~Ostrowski \cite[\S6]{os20}.

One can easily find examples of D-algebraic power series. However, to construct a D-transcendental power series is more tricky and requires a deeper understanding of the concept and of the basic properties of D-transcendental power series. We prove in this section the existence of D-transcendental power series (and families of power series).

Before we come to the proof, let us recall the notion of the
\emph{Wronskian determinant}. The \emph{Wronskian matrix} of the family $g_1,\dots,g_m\in\mathbb{C}\ps{x}$ of $m$ univariate power series is defined as
\[
\begin{pmatrix}
g_1 & \cdots & g_m\\
\partial_x g_1 & \cdots & \partial_x g_m\\
\vdots & \vdots & \vdots\\
\partial_x^{m-1}g_1 &\cdots & \partial_x^{m-1}g_m
\end{pmatrix}.
\]
The determinant of this matrix is called the \emph{Wronskian determinant} of this family. Obviously, the Wronskian determinant of a linearly dependent family of power series equals zero. It can be shown (see e.g. \cite[pp.~90-92]{bo01}) that the converse is true as well. So we have the following result:

\begin{lemma}\label{wronskian}
A family of finitely many power series is linearly independent over $\mathbb{C}$ if and only if its Wronskian determinant equals zero.
\end{lemma}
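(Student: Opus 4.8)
The plan is to establish the Wronskian criterion as stated in the lemma, that a finite family $g_1,\dots,g_m\in\mathbb{C}\ps{x}$ is linearly independent over $\mathbb{C}$ if and only if its Wronskian determinant equals zero, by proving the two directions of this equivalence separately, along the lines of \cite[pp.~90-92]{bo01}. One direction is the routine observation already recorded in the text preceding the lemma; the other is the real content and is where the work lies.

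For the routine direction I would argue as follows. Suppose the series satisfy a nontrivial $\mathbb{C}$-linear relation $\sum_{i=1}^{m}c_i g_i=0$. Since each $c_i$ is a constant, it commutes with the derivation $\partial_x$, so differentiating the relation $k$ times yields $\sum_{i=1}^{m}c_i\,\partial_x^{k}g_i=0$ for every $k\geq 0$. Thus the columns of the Wronskian matrix satisfy one and the same nontrivial relation in every row, the columns are linearly dependent, and the determinant vanishes. This settles one half of the equivalence in the lemma.

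The substance is the opposite direction, which I would prove by induction on the number $m$ of series. The base case $m=1$ is immediate, the Wronskian of a single series being that series itself. For the inductive step I would work in the field of Laurent series $\mathbb{C}((x))$, discard any series that is identically zero, and normalize by factoring the first series $g_1$ out of the leading column; a short sequence of row operations then expresses the Wronskian as $g_1^{\,m}$ times the Wronskian of the $m-1$ logarithmic derivatives $\partial_x(g_2/g_1),\dots,\partial_x(g_m/g_1)$. Vanishing of the full determinant forces the smaller Wronskian to vanish, so the induction hypothesis supplies a nontrivial $\mathbb{C}$-linear relation among these $m-1$ derivatives; integrating that relation and clearing the denominator $g_1$ then produces a nontrivial $\mathbb{C}$-linear relation among $g_1,\dots,g_m$, which closes the induction and completes the equivalence.

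The main obstacle I anticipate is exactly this reduction in the harder direction. Two points need care. First, passing to $\mathbb{C}((x))$ and dividing by $g_1$ must be justified: one deletes the zero series first, and the argument genuinely uses that the base field has characteristic zero, since over a field of positive characteristic the criterion fails. Second, integrating the dependence relation among the logarithmic derivatives introduces constants of integration, and one must check that these can be absorbed so that the recovered relation among $g_1,\dots,g_m$ has coefficients in $\mathbb{C}$ rather than merely in $\mathbb{C}((x))$. Confirming that the resulting relation is honestly $\mathbb{C}$-linear is the delicate step; the row reduction and the propagation of a linear relation through $\partial_x$ are otherwise routine.
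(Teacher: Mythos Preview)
Your argument is the classical one and it is correct for the statement the paper \emph{intends}; note, however, that the lemma as printed contains a typo. The equivalence should read ``linearly \emph{dependent} if and only if the Wronskian vanishes,'' as is clear both from the sentence immediately preceding the lemma and from how the lemma is actually invoked in the proof of Theorem~\ref{01}. You copy the misstatement verbatim in your opening paragraph but then, correctly, prove that a nontrivial $\mathbb{C}$-linear relation forces the Wronskian to vanish and that a vanishing Wronskian produces a nontrivial relation --- i.e., you prove the right criterion rather than the one you wrote down. It would be worth flagging the typo explicitly.

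As for comparison with the paper: there is nothing to compare, since the paper gives no proof of this lemma. It records the easy direction as obvious and cites \cite[pp.~90--92]{bo01} for the converse. Your inductive reduction via the identity
\[
W(g_1,\dots,g_m)=g_1^{\,m}\,W\bigl(\partial_x(g_2/g_1),\dots,\partial_x(g_m/g_1)\bigr)
\]
in $\mathbb{C}((x))$ is exactly the standard route found in B\^ocher. The step you call ``delicate'' is in fact harmless here: in characteristic zero a Laurent series with vanishing derivative is a constant in $\mathbb{C}$, so from $\sum_{i\geq 2} c_i\,\partial_x(g_i/g_1)=0$ one gets $\sum_{i\geq 2} c_i\,(g_i/g_1)=C\in\mathbb{C}$ and hence $-Cg_1+\sum_{i\geq 2} c_ig_i=0$, a genuine $\mathbb{C}$-linear relation with at least one $c_i\neq 0$.
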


Now we use Lemma \ref{wronskian} in order to prove the following statement:

\begin{theorem}\label{01}
Let $f\in\mathbb{C}\ps{x}$ be a D-algebraic power series. Then $f$ satisfies an algebraic differential equation with integer coefficients.
\end{theorem}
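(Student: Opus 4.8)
The plan is to start from a single algebraic differential equation
\[
q\bigl(x,\partial_x^i f \,:\, 0\leq i\leq k-1\bigr)=0,\qquad q\in\mathbb{C}[x,y_1,\dots,y_k],\ q\neq 0,
\]
witnessing that $f$ is D-algebraic, and to upgrade its coefficients from $\mathbb{C}$ to $\mathbb{Z}$. Write $q$ as a $\mathbb{C}$-linear combination $q=\sum_{\mu} c_\mu\, M_\mu$ of finitely many distinct monomials $M_\mu$ in the variables $x,y_1,\dots,y_k$, with all $c_\mu\in\mathbb{C}^\times$. Substituting the jet of $f$ turns each $M_\mu$ into a power series $P_\mu(x)\coloneqq M_\mu\bigl(x,\partial_x^i f\bigr)\in\mathbb{C}\ps{x}$, and the differential equation reads $\sum_\mu c_\mu P_\mu=0$; in particular the $P_\mu$ are linearly dependent over $\mathbb{C}$.

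First I would choose, among all nonzero differential polynomials $q$ vanishing on $f$, one whose monomial support is as small as possible; then the corresponding power series $P_\mu$ form a \emph{minimal} linearly dependent family, so that after deleting one member the rest is linearly independent. By Lemma \ref{wronskian} (the Wronskian criterion), linear \emph{independence} of a finite family of power series over $\mathbb{C}$ is detected by the non-vanishing of a Wronskian determinant, and linear dependence by its vanishing. I would apply this to the family $\{P_\mu\}$: its Wronskian determinant vanishes. Now the key observation is that the Wronskian determinant of the $P_\mu$ is itself a polynomial expression, with \emph{integer} coefficients, in $x$ and finitely many derivatives $\partial_x^j f$ — because each $P_\mu$ is a monomial in $x,\partial_x^i f$, and the Wronskian is built from the $P_\mu$ and their $x$-derivatives by the determinant formula and the product rule, all of which introduce only integer coefficients. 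Hence this vanishing Wronskian is an algebraic differential equation for $f$ with integer coefficients.

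The one thing that must be checked is that this integer-coefficient differential polynomial is \emph{not identically zero} as an element of $\mathbb{Z}[x,y_1,y_2,\dots]$ — otherwise it gives no information. This is exactly where the minimality of the support of $q$ is used: by minimality the proper sub-family obtained by removing one $P_\mu$ is linearly independent, so \emph{its} Wronskian determinant is a nonzero power series, which forces the corresponding Wronskian differential polynomial to be a nonzero element of the polynomial ring; expanding the full $(\#\text{support})\times(\#\text{support})$ Wronskian along that row/column then exhibits this nonzero polynomial (up to sign) as a coefficient, so the full Wronskian differential polynomial is nonzero as well. I would then conclude that $f$ satisfies this nonzero differential polynomial with integer coefficients.

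\textbf{Main obstacle.} The delicate point — and the step I expect to need the most care — is precisely the non-triviality argument: one must distinguish between ``the Wronskian \emph{power series} vanishes'' (which is what Lemma \ref{wronskian} gives, and is forced by the dependence of the $P_\mu$) and ``the Wronskian \emph{differential polynomial} in the abstract variables $x,y_1,y_2,\dots$ is the zero polynomial'' (which would be fatal). Ruling the latter out requires the minimal-support reduction above, together with the remark that a nonzero polynomial can evaluate to the zero power series only if we are unlucky — which here is prevented because the reduced family's Wronskian is a genuinely nonzero power series and appears as a minor. A secondary bookkeeping nuisance is making sure that differentiating the monomials $M_\mu\bigl(x,\partial_x^i f\bigr)$ never leaves the ring $\mathbb{Z}[x,y_1,y_2,\dots]$, i.e.\ that no rational coefficients sneak in; this is immediate from the Leibniz rule but should be stated.
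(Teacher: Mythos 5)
Your overall strategy coincides with the paper's: take the finitely many distinct monomials occurring in a differential equation satisfied by $f$, form their Wronskian, observe that it is a differential polynomial with \emph{integer} coefficients which vanishes when evaluated at $f$ (because the evaluated monomials are linearly dependent power series), and then prove that this differential polynomial is not identically zero. The genuine gap is in your non-triviality step. From the minimal-support choice you correctly deduce that the reduced family is linearly independent, hence by the Wronskian criterion for power series its Wronskian is a nonzero power series and the corresponding $(m-1)\times(m-1)$ Wronskian differential polynomial is nonzero. But the conclusion ``expanding the full Wronskian along that row/column exhibits this nonzero polynomial as a coefficient, so the full Wronskian differential polynomial is nonzero as well'' is a non sequitur: in the cofactor expansion $W_m=\sum_{s}(-1)^{s+m}\,\partial^s(M_m)\,A_s$ the entries $\partial^s(M_m)$ are themselves differential polynomials in the very same variables $x,y_0,y_1,\dots$, not indeterminates independent of the minors $A_s$, so the nonzero term $\pm\,\partial^{m-1}(M_m)\,A_{m-1}$ could in principle be cancelled by the remaining terms (compare $ad-bc$ with $a=b$, $c=d$: the cofactor $a$ is a nonzero polynomial, yet the determinant is the zero polynomial). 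Nothing in your argument excludes such cancellation, and the minimality of the support does not help at this point, because the possible cancellation happens at the level of abstract polynomials, not of the power series $P_\mu$. One could try to repair it by tracking the highest derivative variable, but since several monomials may share the maximal order, that again leads to a signed sum of minors whose non-vanishing needs a separate argument; it is not the one-line conclusion you state.

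The paper closes exactly this gap by evaluation rather than by expansion: using the interpolation Lemma \ref{w} it constructs an explicit \emph{polynomial} $g$ whose associated monomial series $x^j\underline{g}^l$ are linearly independent (their value vectors at $N\geq m$ points are already independent), so the Wronskian evaluated at $g$ is a nonzero power series and hence the abstract Wronskian polynomial $W$ is nonzero. Note that evaluating at a D-transcendental series instead would be circular here, since the existence of such series (Theorem \ref{09}) is derived later from the present theorem. Alternatively one could invoke the differential-field form of the Wronskian criterion (over $\mathbb{C}(x,y_0,y_1,\dots)$ with field of constants $\mathbb{C}$, the distinct monomials are $\mathbb{C}$-linearly independent, hence have nonzero Wronskian), but that is a different statement from Lemma \ref{wronskian}, which concerns power series, and would require its own proof or citation. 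As written, your proof has a hole precisely at the step you yourself flagged as delicate.
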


The proof of Theorem \ref{01} presented here is due to E.~Gourin and J.~F.~Ritt \cite[\S 2]{gr27}. Before proving the theorem we introduce the following technical lemma:

\begin{lemma}\label{w}
For any number of points in $\mathbb{C}$, we can always construct a polynomial $f\in\mathbb{C}[x]$ with any given values of itself and its first $k-1$ derivatives at these points. More precisely: Let $k,N\in\mathbb{N}$ be two positive integers. Consider arbitrary $N$ points $a_r \in\mathbb{C}$ and $N$ vectors $b_r=(b_{r,0},\dots,b_{r,k-1})\in\mathbb{C}^{k}$ for $1\leq r \leq N$. Then there exists a polynomial $f\in\mathbb{C}[x]$ satisfying the equality
\begin{align}\label{eq:w}
\bigl(f(a_r)\,,\,\partial_xf(a_r)\,,\,\dots\,,\,\partial_x^{k-1}f(a_r)\bigr)=b_{r},
\end{align}
for each $1\leq r\leq N$.
\end{lemma}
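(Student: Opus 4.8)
The plan is to reduce the statement to the surjectivity of a single linear map and then verify surjectivity by a dimension count, reading the hypothesis with the points $a_1,\dots,a_N$ pairwise distinct (which is what makes the prescribed jet data unconstrained).

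First I would set $V\coloneqq\{\,f\in\mathbb{C}[x]\,:\,\deg f< kN\,\}$, a $\mathbb{C}$-vector space of dimension $kN$, and consider the linear evaluation map
\[
\Phi\colon V\to\mathbb{C}^{kN},\qquad f\mapsto\bigl(f(a_r),\,\partial_xf(a_r),\,\dots,\,\partial_x^{k-1}f(a_r)\bigr)_{r=1}^N .
\]
The assertion of the lemma is precisely that $\Phi$ attains every tuple $(b_1,\dots,b_N)\in(\mathbb{C}^k)^N=\mathbb{C}^{kN}$, i.e.\ that $\Phi$ is surjective. Since source and target both have dimension $kN$, by the rank--nullity theorem it is enough to show that $\Phi$ is injective.

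Next I would check injectivity. Suppose $f\in V$ satisfies $\Phi(f)=0$, i.e.\ $\partial_x^sf(a_r)=0$ for all $0\le s\le k-1$ and all $1\le r\le N$. Then each $a_r$ is a zero of $f$ of multiplicity at least $k$, and since the $a_r$ are pairwise distinct, the polynomial $\prod_{r=1}^N(x-a_r)^k$, of degree $kN$, divides $f$. As $\deg f<kN$, this forces $f=0$. Hence $\ker\Phi=0$, so $\Phi$ is bijective, in particular surjective, and a polynomial $f$ as required exists (and may even be taken of degree $<kN$).

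There is no real obstacle here; the only things to be careful about are that differentiation drops vanishing order by exactly one, so that vanishing of $f$ and its first $k-1$ derivatives at $a_r$ is equivalent to $(x-a_r)^k\mid f$, and that the count $\dim V=kN$ matches the number of prescribed scalars exactly when the $a_r$ are distinct. For completeness one could instead argue by induction on $N$: the case $N=1$ is handled explicitly by the truncated Taylor polynomial $\sum_{s=0}^{k-1}\frac{b_{1,s}}{s!}(x-a_1)^s$, and in the inductive step one adds to a solution $g$ for the first $N-1$ points a correction of the form $\bigl(\prod_{r=1}^{N-1}(x-a_r)^k\bigr)\cdot p(x)$, choosing the low-order Taylor coefficients of $p$ at $a_N$ by the same one-point argument so as to fix the jet at $a_N$ without disturbing the jets at $a_1,\dots,a_{N-1}$.
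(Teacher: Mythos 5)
Your argument is correct, but it takes a genuinely different route from the paper. The paper proves the lemma constructively by induction on $N$: given a polynomial $g$ matching the jets at $a_1,\dots,a_{N-1}$ (the base case being the truncated Taylor polynomial), it adds a correction $\prod_{r=1}^{N-1}(x-a_r)^k\cdot\sum_{i=0}^{k-1}\frac{\tilde b_i}{i!}(x-a_N)^i$ and determines the coefficients $\tilde b_i$ recursively from a triangular linear system obtained by differentiating at $a_N$ --- exactly the alternative you sketch in your last sentence. Your main argument instead packages the problem as surjectivity of the jet-evaluation map $\Phi\colon\{f:\deg f<kN\}\to\mathbb{C}^{kN}$ and gets it from injectivity via the multiplicity/divisibility argument and equality of dimensions; this is shorter, yields in addition uniqueness of the interpolant of degree $<kN$ and an explicit degree bound, while the paper's induction yields an explicit recursive formula for a solution. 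One point worth noting in both treatments: the statement as printed says ``arbitrary $N$ points'' but implicitly requires the $a_r$ to be pairwise distinct (otherwise inconsistent data can be prescribed); your proof uses distinctness openly in the divisibility step, and the paper's uses it tacitly, since the leading entry of its triangular system is $\prod_{r=1}^{N-1}(a_N-a_r)^k$, which must be nonzero to solve for $\tilde b_0$. Since the lemma is applied to the distinct points $1,\dots,N$, reading the hypothesis this way, as you did, is the intended one.
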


\begin{proof}
We prove the claim by induction on $N$. The induction base follows immediately by setting
\[
f:=\sum_{i=0}^{k-1}\frac{b_{1,i}}{i!}(x-a_1)^i.
\]
Let now $g\in\mathbb{C}[x]$ be a polynomial satisfying 
\[
\bigl(g(a_r),\partial_xg(a_r),\dots,\partial_x^{k-1}g(a_r)\bigr)=b_{r},
\]
for all $1\leq r\leq N-1$. We then define 
\[
f\coloneqq g+\underbrace{\prod_{r=1}^{N-1}(x-a_r)^{k}\cdot\sum_{i=0}^{k-1}\frac{\tilde{b}_i}{i!}(x-a_N)^i}_{\eqqcolon\tilde{g}},
\]
where the $\tilde{b}_i$'s can be iteratively computed as the solution of the following system of linear equations:
\begin{align*}
b_{N,i}&=\partial_x^i g(a_N)+\partial_x^i\tilde{g}(a_N)\\
&=\partial_x^i g(a_N)+\sum_{j=0}^i\tilde{b}_j\binom{i}{j}\partial_x^{i-j}\left(\prod_{r=1}^{N-1}(x-a_r)^{k}\right)|_{x=a_N},
\end{align*}
for $0\leq i\leq k-1$. Notice that each $\tilde{b}_i$ is uniquely given by a linear combination of $\tilde{b}_j$'s with $j<i$. It follows then that the polynomial $f$ constructed in this way satisfies condition (\ref{eq:w}).
\end{proof}

\begin{proof}[Proof of Theorem \ref{01}]
Any algebraic differential equation of the form (\ref{02}) satisfied by a power series $f$ can be written as
\begin{equation}\label{03}
\sum_{j\in J,l\in L}c_{(j,l)}x^j\underline{f}^l=0,
\end{equation}
where $J\subseteq\mathbb{N}$ and $ L\subseteq\mathbb{N}^{k}$, for some $k\in\mathbb{N}$, and 
\[
\underline{f}^l=\prod_{1\leq i\leq k} (\partial_x^{i-1}f)^{l_i}.
\]
By construction, all the expressions $x^j\underline{f}^l$ are distinct from each other. This means that the family of power series $x^j\underline{f}^l$ is linearly dependent over $\mathbb{C}$ and thus its Wronskian determinant must equal zero. As this Wronskian determinant is a polynomial in $\partial_x^s(x^j\underline{f}^l)$ with integer coefficients, it is also a polynomial in $x$ and $f$ and its higher derivatives with integer coefficients. Therefore, it is enough to show that the polynomial $W(x,y_0,\dots,y_{k+m-2})\in\mathbb{Z}[x,y_0,\dots,y_{k+m-2}]$, with $m=|J|\cdot|L|$, defining the Wronskian determinant of the family $x^j\underline{f}^l$ is a non-zero polynomial to obtain an algebraic differential equation with integer coefficients as claimed. To be more precise, let us extend the derivation $\partial_x$ to the the polynomial ring $\mathbb{C}[x,y_0,\dots,y_{k+m-2}]$ by setting $\partial_x(y_i)\coloneqq y_{i+1}.$ Then the polynomial
\[
W(x,y_0,\dots,y_{k+m-2})=\det \bigl(\partial_x^s(x^j \prod_{1\leq i\leq k}y_{i-1}^{l_i})\bigr)_{\substack{0\leq s\leq m-1\\j\in J, \ l\in L}}
\]
defines the Wronskian determinant as it satisfies 
\[
W(x,f,\dots,\partial_x^{k+m-2}f)= \text{ the Wronskian determinant of the family } x^j\underline{f}^l.
\]
We therefore aim to show that $W\neq~0$. Let us assume by contradiction that $W=0$. We then have $W(x,g,\dots,\partial^{k+m-2}_xg)=0$ for any power series $g\in\mathbb{C}\ps{x}$. According to Lemma \ref{wronskian}, this would mean that for any power series $g$, the expressions $x^j\underline{g}^l$'s are linearly dependent over $\mathbb{C}$ and so they satisfy an equation like (\ref{03}), let us say 
\begin{align}\label{eq:a}
\sum_{j\in J,l\in L} a_{(j,l)}x^j\underline{g}^l(x)=0.
\end{align}
According to Lemma \ref{w}, for $N\in\mathbb{N}$ sufficiently large ($N\geq m$), we can always construct a polynomial $g\in\mathbb{C}[x]$ with the property that for $j\in J$ and $l\in L$, the vectors 
\[
\begin{pmatrix} 
1^j\underline{g}^l(1)\\
\vdots\\
N^j\underline{g}^l(N)
\end{pmatrix} 
\]
are linearly independent over $\mathbb{C}$. This, however, contradicts the linear relation (\ref{eq:a}).
\end{proof}

The concept of D-algebraicity can be extended also to families of power series. We call a family $f_1,\dots,f_l\subseteq\mathbb{C}\ps{x}$ of univariate power series \emph{differentially algebraically dependent} or \emph{D-algebraically dependent} if there exists a polynomial $q\in\mathbb{C}[x,y_1,\dots,y_{kl}]$ in $kl+1$ variables, for some $k\in\mathbb{N}$, such that 
\[
q(x,\partial_x^if_j  \,:\, 0\leq i\leq k-1,1\leq j\leq l)=0.
\]
Otherwise, we call the family \emph{differentially algebraically independent} or \emph{D-algebraically independent}. 

Let us remark that the concept of D-algebraically dependent families of power series was used for example by E.~Kolchin in \cite[Chapter II, \S 7]{ko73} or by C.~Hardouin and M.~F.~Singer in \cite{hs08}.

Notice first that, similarly to the case of D-algebraic power series (and it can be proven using the same argument as in the proof of Theorem \ref{01}), a family of D-algebraically dependent power series satisfies an algebraic differential equation with integer coefficients.

\begin{theorem}\label{06}
Let $f_1,\dots,f_l\in\mathbb{C}\ps{x}$ be a D-algebraically dependent family of power series. Then the power series $f_1,\dots,f_l$ satisfy an algebraic differential equation with integer coefficients.
\end{theorem}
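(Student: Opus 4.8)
The plan is to follow the proof of Theorem \ref{01} almost verbatim, replacing monomials in the derivatives of a single series $f$ by monomials in the derivatives of all of $f_1,\dots,f_l$ simultaneously. A defining relation of D-algebraic dependence can be expanded as
\[
\sum_{a\in A,\,e\in E} c_{a,e}\, x^{a}\prod_{m=1}^{l}\prod_{i=0}^{k-1}\bigl(\partial_x^{i}f_m\bigr)^{e_{m,i}}=0,
\]
with $A\subseteq\mathbb{N}$ and $E\subseteq\mathbb{N}^{kl}$ finite, the $c_{a,e}\in\mathbb{C}$ not all zero, and such that the $M\coloneqq|A|\cdot|E|$ power series $P_{a,e}\coloneqq x^{a}\prod_{m,i}(\partial_x^{i}f_m)^{e_{m,i}}$ are pairwise distinct as formal monomials. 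First I would note that the relation above exhibits $\{P_{a,e}\}$ as a linearly dependent family over $\mathbb{C}$, so by Lemma \ref{wronskian} its Wronskian determinant vanishes.

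Next I would argue that this Wronskian determinant is the value at $z_{m,s}=\partial_x^{s}f_m$ of a fixed polynomial $W\in\mathbb{Z}[x,z_{m,s}\,:\,1\le m\le l,\,0\le s\le N_0]$ for a suitable $N_0$: each entry $\partial_x^{s}P_{a,e}$ is, by the Leibniz and power rules, a polynomial in $x$ and the $\partial_x^{i}f_m$ with \emph{integer} coefficients, and a determinant of integer-polynomial entries is again an integer polynomial. Hence, provided $W\not\equiv 0$ as an abstract polynomial, the identity $W(x,\partial_x^{s}f_m)=0$ is precisely an algebraic differential equation with integer coefficients satisfied by $f_1,\dots,f_l$.

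It therefore remains to show $W\not\equiv 0$, and this is where the real work sits. Suppose $W\equiv 0$; then for every tuple of polynomials $g_1,\dots,g_l\in\mathbb{C}[x]$ the family $\{x^{a}\prod_{m,i}(\partial_x^{i}g_m)^{e_{m,i}}\}$ has vanishing Wronskian, so by Lemma \ref{wronskian} it is linearly dependent over $\mathbb{C}$. I would contradict this by invoking Lemma \ref{w} separately for each index $m$: fix distinct nonzero points $a_1,\dots,a_N$ with $N\ge M$ and integer weights $w_{m,i}$ such that the map $(a,e)\mapsto a+\sum_{m,i}w_{m,i}e_{m,i}$ is injective on $A\times E$ (possible since $A,E$ are finite), and construct $g_1,\dots,g_l$ whose order-$(k-1)$ jet at $a_r$ satisfies $\partial_x^{i}g_m(a_r)=a_r^{w_{m,i}}$. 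Then $P_{a,e}(a_r)=a_r^{\,a+\langle w,e\rangle}$, so the $N\times M$ evaluation matrix is a generalized Vandermonde matrix with $M$ distinct column exponents, hence of full column rank; the $M$ columns are linearly independent, contradicting the linear dependence just derived. Thus $W\not\equiv 0$.

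The main obstacle is this last step: setting up the prescribed jet data (equivalently, the weights $w_{m,i}$ and the points $a_r$) so that the resulting $M\times M$ evaluation matrix is genuinely invertible. Everything else is a routine transcription of the single-series argument, so in the final write-up I would carry out the Wronskian reduction in the family notation and then only indicate the modifications to the nonvanishing argument, referring to \cite{gr27} for the details common to Theorem \ref{01}.
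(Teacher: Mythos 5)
You take essentially the same route as the paper, which proves Theorem \ref{06} by simply repeating the Wronskian argument of Theorem \ref{01} (symbolic Wronskian $W$ with integer coefficients, then nonvanishing of $W$ via a polynomial tuple constructed with Lemma \ref{w}); indeed your nonvanishing step, prescribing jets $\partial_x^i g_m(a_r)=a_r^{w_{m,i}}$ with an injective weight encoding so that the evaluation matrix becomes a generalized Vandermonde matrix, is more explicit than the paper's, which only asserts that a suitable $g$ with linearly independent evaluation vectors exists. One small correction: ``distinct nonzero'' points do not guarantee full column rank of $(a_r^{d_c})$ with distinct exponents (points $1,-1$ with exponents $0,2$ give two equal columns), so take distinct \emph{positive} points, say $a_r=r$, and nonnegative weights; then a vanishing nontrivial combination of the $M$ columns would force a nonzero $M$-term generalized polynomial $\sum_c \lambda_c x^{d_c}$ to have at least $M$ positive roots, contradicting Descartes' rule of signs, which completes your argument.
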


The question now is whether it is always possible to construct arbitrarily large D-algebraically independent families of power series. The answer is ``yes'' as stated in the following theorem:

\begin{theorem}\label{09}
For any given positive integer $l\in \mathbb{N}$, there exists a  D-algebraically independent family of $l$ power series.
\end{theorem}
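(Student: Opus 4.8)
The plan is to construct the required $l$ power series one at a time, using a transcendence-degree argument to control D-algebraic independence, exactly as in the spirit of Theorem~\ref{01} and Theorem~\ref{06}. First I would recall the equivalent reformulation: a family $f_1,\dots,f_l\in\mathbb{C}\ps{x}$ is D-algebraically dependent if and only if the field $\mathbb{Q}(x,\partial_x^if_j:i\in\mathbb{N},1\leq j\leq l)$ has finite transcendence degree over $\mathbb{Q}(x)$ (this is the content of Theorem~\ref{01} and Theorem~\ref{06}, since an algebraic differential equation with integer coefficients for the family, iteratively differentiated, forces all but finitely many of the higher derivatives to be algebraic over $\mathbb{Q}(x)$ adjoined to the lower ones). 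Hence, to produce a D-algebraically \emph{independent} family, it suffices to exhibit $f_1,\dots,f_l$ for which this transcendence degree is infinite; equivalently, the collection $\{\partial_x^if_j(0):i\in\mathbb{N},1\leq j\leq l\}$, say, of Taylor coefficients (up to factorials) is algebraically independent over $\mathbb{Q}$.

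The construction itself is then immediate: pick a countable set $\{\alpha_{i,j}:i\in\mathbb{N},1\leq j\leq l\}\subseteq\mathbb{C}$ that is algebraically independent over $\mathbb{Q}$ (such a set exists because $\mathbb{C}$ has uncountable, hence infinite, transcendence degree over $\mathbb{Q}$), and set
\[
f_j\coloneqq\sum_{i\in\mathbb{N}}\frac{\alpha_{i,j}}{i!}\,x^i\in\mathbb{C}\ps{x},\qquad 1\leq j\leq l.
\]
Then $\partial_x^if_j(0)=\alpha_{i,j}$ for all $i$ and $j$, so the field $F\coloneqq\mathbb{Q}(x,\partial_x^if_j:i\in\mathbb{N},1\leq j\leq l)$ contains the evaluation-at-$0$ images $\alpha_{i,j}$, which are algebraically independent over $\mathbb{Q}$; therefore $F$ has infinite transcendence degree over $\mathbb{Q}$, and a fortiori over $\mathbb{Q}(x)$. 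By the reformulation above, the family $f_1,\dots,f_l$ is D-algebraically independent.

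To make the last inference rigorous I would argue by contraposition using Theorem~\ref{06}: if $f_1,\dots,f_l$ were D-algebraically dependent, there would be a nonzero $q\in\mathbb{Z}[x,y_1,\dots,y_{kl}]$ with $q(x,\partial_x^if_j:0\leq i\leq k-1,1\leq j\leq l)=0$; differentiating this relation repeatedly expresses every $\partial_x^if_j$ with $i\geq k$ algebraically over $\mathbb{Q}(x,\partial_x^{i'}f_{j'}:i'<k)$, so $F$ would be a finite transcendence degree extension of $\mathbb{Q}(x)$, hence of finite transcendence degree over $\mathbb{Q}$, contradicting the algebraic independence of the infinitely many $\alpha_{i,j}\in F$. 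The main (and really only) subtlety is justifying the existence of the algebraically independent family $\{\alpha_{i,j}\}$ and the bookkeeping that ``finitely many differential relations $\Rightarrow$ finite transcendence degree'' — both are standard, the first from cardinality of $\mathbb{C}$, the second from iterated differentiation of the defining equation exactly as in the proof of Theorem~\ref{01}. Everything else is a one-line computation of Taylor coefficients.
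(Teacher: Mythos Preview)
Your construction is essentially the paper's: choose Taylor coefficients that are algebraically independent over $\mathbb{Q}$ and invoke Theorem~\ref{06} to reduce to a differential relation with integer coefficients. However, your justification routes through a claimed equivalence that is false for $l>1$, and this is a genuine gap.

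The asserted equivalence ``$f_1,\dots,f_l$ are D-algebraically dependent if and only if $\mathbb{Q}(x,\partial_x^if_j)$ has finite transcendence degree over $\mathbb{Q}(x)$'' already fails for $l=2$: take $f_1$ D-transcendental and $f_2=f_1$. The relation $f_1-f_2=0$ witnesses D-algebraic dependence, yet $\mathbb{Q}(x,\partial_x^if_1)$ has infinite transcendence degree over $\mathbb{Q}(x)$. Correspondingly, your iterated-differentiation step does not do what you claim: differentiating the single relation $q=0$ produces one new relation per step, but each step also introduces $l$ new quantities $\partial_x^kf_1,\dots,\partial_x^kf_l$, so for $l>1$ the count of relations never catches up and no finiteness follows. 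A separate problem is the assertion that $\alpha_{i,j}\in F$: the field $F$ sits inside $\mathbb{C}((x))$ and is generated over $\mathbb{Q}(x)$ by power series; there is no reason the constant term of a generator should itself lie in $F$, and you give no argument for it.

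The paper bypasses all of this with a direct evaluation: it expands the $f_j$ about a transcendental point $\alpha_0$ rather than $0$, then substitutes $x=\alpha_0$ into the integer-coefficient relation $q(x,\partial_x^if_j)=0$ supplied by Theorem~\ref{06}, obtaining a nontrivial polynomial identity among the algebraically independent $\alpha$'s. Your construction centered at $0$ can be repaired by the same device: since $\mathbb{C}\ps{x}$ is a domain, divide $q$ by the highest power of $x$ it contains, so that $q(0,\cdot)\neq 0$; evaluating the relation at $x=0$ then gives $q(0,\alpha_{i,j})=0$, contradicting the algebraic independence of the $\alpha_{i,j}$ over $\mathbb{Q}$. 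No transcendence-degree bookkeeping is needed, and none is available for $l>1$.
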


\begin{proof}
Assume by contradiction that any family $f_1,\dots,f_l\in\mathbb{C}\ps{x}$ of $l$ power series is D-algebraically dependent, i.e., the power series satisfy an algebraic differential equation
\begin{equation}\label{08}
q(x,\partial_x^if_j  \,:\, 0\leq i\leq k-1,1\leq j\leq l)=0,
\end{equation}
for some $k\in\mathbb{N}$. Then by Theorem \ref{06}, we may w.l.o.g. assume that $q$ has integer coefficients. Let $\alpha_0,\alpha_1,\alpha_2,\dots$ be any sequence of complex numbers that has infinite transcendence degree over $\mathbb{Q}$. Further, let $\Phi:\{1,\dots,l\}\times\mathbb{N}\rightarrow\mathbb{N}\backslash\{0\}$ be a bijection between $\{1,\dots,l\}\times\mathbb{N}$ and $\mathbb{N}\backslash\{0\}$. For each $j=1,\dots,l$ we define
\[
f_j\coloneqq\sum_{i\in\mathbb{N}}\frac{\alpha_{\Phi(j,i)}}{i!}(x-\alpha_0)^i.
\]
Hence, each derivative of $f_j$ satisfies $\partial_x^if_j(\alpha_0)=\alpha_{\Phi(j,i)}$. But substituting $x\mapsto\alpha_{0}$ into the equation (\ref{08}) yields an algebraic equation over $\mathbb{Q}$ for $\alpha_j$'s which is a contradiction to the infinite transcendence degree over $\mathbb{Q}$ of the sequence $\alpha_0,\alpha_1,\alpha_2,\dots$.
\end{proof}

\medskip\noindent {\bf Acknowledgements:} I am grateful to Herwig Hauser for sharing with me his ideas which gave rise to the main objectives presented in this note. I thank also to Josef Schicho for his methodological contributions to the technical parts of this note. Furthermore, I would like to express my sincere gratitude to Alin Bostan for his time, ideas and for the great introduction to the field of D-algebraic power series he provided to me. Finally, my sincere thanks also goes to Matteo Gallet for his valuable discussions and corrections of the first version of this text.

\end{document}